\documentclass[12pt]{article}
\usepackage[usenames,dvipsnames]{color}
\usepackage{graphicx,amsthm,amsmath,amssymb,color,srcltx,a4wide,xr,xargs,bbm,natbib}
\usepackage[colorlinks]{hyperref}
\usepackage[shortlabels]{enumitem}

\parskip .3cm
\usepackage{pdfsync}


\def\constant{\mathrm{cst}}



\def\dfstp{L} 
\def\limdfstp{\mathbb{L}} 

\def\HH{H} 

\def\nuxi{\mu} 

\newcommand\1[1]{\mathbbm{1}_{#1}}
\newcommand\ind[1]{\mathbbm{1}{\{#1\}}}

\def\Nset{\mathbb{N}}
\def\Zset{\mathbb{Z}}
\def\Rset{\mathbb{R}}

\def\Eset{\mathsf{E}}


\newcommand{\bu}{\boldsymbol{u}}

\newcommand{\bx}{\boldsymbol{x}}

\newcommand{\bz}{\boldsymbol{z}}
\newcommand{\bX}{\boldsymbol{X}}

\newcommand{\vectorbold}[1]{\boldsymbol{#1}}

\newcommand\mby{\mathbb{Y}}
\newcommand\mbz{\mathbb{Z}}

\newcommand\blockX{\mathbb{X}}

\newcommand\classF[1]{\widehat{\mathcal{#1}}}

\newcommand\sphere[1]{\mathbb{S}^{#1}}

\def\ellinfty{\ell_\infty}

\def\limitmap{\varphi}


\newcommandx{\numult}[2][2=]{{\boldsymbol{\nu}}_{\vectorbold{#1}_{#2}}} 
\newcommand{\numultseq}[1]{{\boldsymbol{\nu}}_{#1}} 




\def\convdistr{\stackrel{\mbox{\tiny\rm d}}{\to}} 
\def\fidi{\stackrel{\mbox{\tiny\rm fi.di.}}{\longrightarrow}} 

\def\convprob{\stackrel{\mbox{\small\tiny p}}{\to}}

\def\convweak{\Rightarrow}   



\def\rme{\mathrm{e}}

\def\rmd{\mathrm{d}} 


\def\esp{\mathbb E}
\def\pr{\mathbb P}
\def\var{\mathrm{var}}
\def\cov{\mathrm{cov}}

\newcommand{\CTE}{{\rm CTE}}
\newcommand\tail[1]{\bar{#1}}


\def\mce{\mathcal E}
\def\mcf{\mathcal F}
\def\mcg{\mathcal G}
\def\mch{\mathcal H}

\def\mcl{\mathcal L}
\def\mcm{\mathcal M}

\newcommandx\orderstat[2][1=X]{#1_{#2}}


\newcommand\uncompact[1]{\overline{\Rset}^{#1}\setminus\{\vectorbold0\}}


\newcommand{\sequence}[3]{\{#1_#2,#2\in#3\}}
\newcommand{\sequenceshort}[2]{\{#1_#2\}}

\def\iid{i.i.d.}
\def\ie{i.e.}
\def\eg{e.g.}
\def\wrt{with respect to}

\def\tailz{\beta} 



\newcommand\TEP{\mathbb{G}} 

\newcommandx\TEPmult[1][1=]{\mathbb{M}^{#1}} 




\newcommand\return[1]{\tau_{#1}}


\def\anticlustering{finite cluster}
\def\Anticlustering{Finite cluster}

\usepackage{cleveref}

\newtheorem{theorem}{Theorem}[section]
\newtheorem{lemma}[theorem]{Lemma}

\newtheorem{corollary}[theorem]{Corollary}
\newtheorem{proposition}[theorem]{Proposition}
\newtheorem{definition}[theorem]{Definition}
\newtheorem{hypothesis}[theorem]{Assumption}
\theoremstyle{remark}
\newtheorem{remark}[theorem]{Remark}
\newtheorem{example}[theorem]{Example}

\crefname{hypothesis}{Assumption}{Assumptions}
\crefname{theorem}{Theorem}{Theorems}

\numberwithin{equation}{section}

\begin{document}
\title{The tail empirical process of regularly varying functions of geometrically ergodic Markov
  chains}

\author{Rafa{\l} Kulik\thanks{ Department of Mathematics and Statistics, University of Ottawa, K1N
    6N5 Ottawa, Ontario Canada } \and Philippe Soulier\thanks{Universit\'e Paris Nanterre,
    D{\'e}partement de math{\'e}matique et informatique, Laboratoire MODAL'X, 200 avenue de la
    R{\'e}publique, 92000 Nanterre, France} \and Olivier Wintenberger\thanks{Sorbonne Universit\'es,
    LSTA, 4 place Jussieu, BP 158 75005 Paris, FRANCE} }

\maketitle

\begin{abstract}
  We consider a stationary regularly varying time series which can be expressed as a function of a
  geometrically ergodic Markov chain. We obtain practical conditions for the weak convergence of the
  tail array sums and feasible estimators of cluster statistics. These conditions include the
  so-called geometric drift or Foster-Lyapunov condition and can be easily checked for most usual
  time series models with a Markovian structure. We illustrate these conditions on several models
  and statistical applications.  A counterexample is given to show a different limiting behavior
  when the geometric drift condition is not fulfilled.
\end{abstract}

\section{Introduction}
\label{sec:introduction}

Let $\sequence{X}{j}{\Zset}$ be a stationary, regularly varying univariate time series with marginal
distribution function $F$ and tail index $\alpha>0$. This means that for each integer $h\geq0$, there
exists a non zero Radon measure $\numultseq{0,h}$ on $\uncompact{h+1}$ such that
$\numultseq{0,h}(\overline{\Rset}^{h+1}\setminus\Rset^{h+1}) = 0$ and
\begin{align}
  \lim_{x\to\infty} \frac{\pr((X_0,\dots,X_h) \in x A)}{\pr(X_0>x)} = \numultseq{0,h}(A) \; ,  \label{eq:rv-positive}
\end{align}
for all relatively compact sets $A\subset \uncompact{h+1}$ satisfying $\numultseq{0,h}(\partial
A)=0$. The measure $\numultseq{0,h}$, called the exponent measure of $(X_0,\dots,X_h)$, is
homogeneous with index $-\alpha$, i.e. $\numultseq{0,h}(tA) = t^{-\alpha} \numultseq{0,h}(A)$. The
choice of the denominator $\pr(X_0>x)$ entails that~$\numultseq{0,h}((1,\infty)\times\Rset^h) = 1$
\ie\ that the right tail of the stationary distribution is not trivial and that $X_0$ satisfies the
so-called balanced tail condition.

Many statistical methods for extreme value characteristics of a time series are based on tail array
sums of the form
\begin{align}
  \label{eq:def-tailarray}
  M_n(\phi)  = \frac{1}{n\tail{F}(u_n)} \sum_{j=1}^n  \phi(({X}_{j},\ldots,X_{j+h})/u_n) \; ,
\end{align}
for a fixed non negative integer $h$, a non decreasing sequence $\{u_n\}$ such that
$\lim_{n\to\infty} u_n = \lim_{n\to\infty}n\tail{F}(u_n)=\infty$ and a measurable function $\phi$ on
$\Rset^{h+1}$ such that $\esp[|\phi(({X}_{0},\ldots,X_{h})/u_n)|]<\infty$ for all $n$. An important
example is the tail empirical distribution function, defined by
\begin{align}
  \label{def:usual-tep}
  \widetilde{T}_n(s) =   \frac{1}{n\tail{F}(u_n)} \sum_{j=1}^n  \ind{X_{j}>u_ns}  \; ,
\end{align}
which is used for the estimation of univariate extremal characteristics such as the tail index.  We
are interested in the weak convergence of the centered and renormalized process
\begin{align}
  \label{eq:def-tepmult}
  \TEPmult_{n}(\phi) & = \sqrt{n\bar{F}(u_n)} \{M_n(\phi) - \esp[M_n(\phi)]\}  \; .
\end{align}
There is a huge literature on this problem. Essential references are
\cite{rootzen:leadbetter:dehaan:1998} which investigate weak convergence of tail array
sums,~\cite{drees:1998,drees:2000weighted,drees:2002tail,drees:2003} who developed techniques to
study tail empirical and tail quantile processes for $\beta$-mixing time series, \cite{rootzen:2009}
which reviews the results for the functional convergence of the tail empirical process in the case of
\iid\ and weakly dependent (strong or $\beta$-mixing) univariate time series. Recently,
\cite{drees:rootzen:2010} investigated the weak convergence of $\{\TEPmult_n(\phi),\phi\in \mcg\}$ as a
sequence of random elements in the space $\ell_\infty(\mcg)$, where $\mcg$ is a class of functions.
\cite{drees:segers:warchol:2014} applied the latter reference to the estimation of the empirical
distribution function of the spectral tail process (see definition in \Cref{sec:edfstp}).

In all these references, results are proved under strong or $\beta$-mixing conditions and under
additional assumptions guaranteeing the existence of the limiting variances and tightness in the
case of functional convergence. These additional conditions are notably hard to check and have been verified
for a handful of particular models such as solutions of stochastic recurrence equations (including
some GARCH processes) and certain linear processes such as AR(1) processes. See \eg\
\cite{drees:2000weighted,drees:2003} and \cite{davis:mikosch:2009}.

The main purpose of this paper is to show that for time series which can be expressed as functions
of an underlying Markov chain, these results can be proved under essentially a single, easily
checked condition, namely the geometric or Foster-Lyapunov drift condition. See
\cite[Chapter~15]{meyn:tweedie:2009}.  In order to use it in the context of extreme value theory,
the drift function must have an additional homogeneity property.  This geometric drift condition
was first used in the context of extreme value theory by \cite{roberts:rosenthal:segers:sousa:2006}
who used it to prove that the extremal index is positive. It was later used by
\cite{mikosch:wintenberger:2013} to obtain large deviations and weak convergence to stable laws for
heavy tailed functions of a Markov chain.

The main result of this paper is \Cref{theo:fidi} in \Cref{sec:tail-arrays} which proves the joint
asymptotic normality of tail array sums of the form~(\ref{eq:def-tailarray}) for functions of
irreducible Markov chains which satisfy the geometric drift condition.  In the first place,
irreducibility and the drift condition imply that the time series is $\beta$-mixing with geometric
decay of the $\beta$-mixing coefficients.  Then, it is possible to use the regenerative properties
of irreducible Markov chains to obtain the bounds and other ingredients needed to prove the central
limit theorem, such as the existence of the limiting variance and asymptotic negligibility. The link
between this condition and those used in the literature will be discussed at the end of
\Cref{sec:tail-arrays}.  In \Cref{sec:statapp}, we will strengthen the finite dimensional
convergence to functional convergence over classes of functions. Such a strengthening is needed for
statistical applications.

The geometric drift condition is well known and has been established for many models in the Markov
chain literature, but in order to be useful for extreme value problems, the drift function must have
some homogeneity properties. In \Cref{sec:two-models}, we will provide a practical method to obtain
a suitable drift function.

The geometric decay of the $\beta$-mixing coefficient is actually not an essential ingredient of the
proof of our results. However, as illustrated in \Cref{sec:counterexample}, there are examples of
non geometrically ergodic Markov chains for which the centered and normalized tail empirical process
has a non Gaussian limit, the normalization being different from the usual $\sqrt{n\bar{F}(u_n)}$.
Thus the geometric drift condition cannot be relaxed easily and it is the subject of further
research to find practical sufficient conditions for non geometrically ergodic Markov chains.

The rest of the paper is organized as follows. \Cref{sec:results} contains our main results on
functions of Markov chains, examples and the aforementioned counterexample. \Cref{sec:general}
contains a central limit theorem for tail array sums and \Cref{sec:proof-markov} contains the proof
of the results of \Cref{sec:results}.

\section{Main results for functions of  Markov chains}
\label{sec:results}

Our context is a slight extension of the one in \cite{mikosch:wintenberger:2013}. We now assume
that $\sequence{X}{j}{\Nset}$ is a function of a stationary Markov chain
$\sequence{\mby}{j}{\Nset}$, defined on a probability space $(\Omega,\mcf,\pr)$, with values in a
measurable space $(E,\mce)$. That is, there exists a measurable real valued function $g:E\to\Rset$ such that
$X_j = g(\mby_j)$.

\begin{hypothesis}
  \label{hypo:drift-small}
  \begin{enumerate}[(i),wide=0pt]
  \item The Markov chain $\{\mby_j,j\in\Zset\}$ is strictly stationary under $\pr$.
    \item The sequence $\{X_j=g(\mby_j),j\in\Zset\}$ is regularly varying with tail
    index $\alpha>0$.
  \item There exist a measurable function $V:E\to[1,\infty)$, $\gamma\in (0,1)$ and $b>0$ such that for all $y\in E$,
    \begin{align}
      \label{eq:drift}
      \esp[V(\mby_1) \mid \mby_0=y] \leq \gamma V(y)  + b \;.
    \end{align}
  \item There exist an integer $m\geq1$ and $x_0\geq 1$ such that for all $x\geq x_0$, there exists a probability measure
    $\nu$ on $(E,\mce)$ and $\epsilon>0$ such that, for all $y \in\{V \leq x\}$ and all measurable
    sets $B\in\mce$,
    \begin{align}
       \label{eq:small-set}
       \pr(\mby_m \in B \mid \mby_0=y) \geq \epsilon \nu(B) \; .
    \end{align}
  \item There exist $q_0\in(0,\alpha)$ and a constant $c>0$ such that
    \begin{align}
      \label{eq:g-bound-v}
      |g|^{q_0} \leq c V \; .
    \end{align}
  \item For every $s>0$,
    \begin{align}
      \limsup_{n\to\infty} \frac{1}{u_n^{q_0}\tail{F}(u_n)} \esp\left[V(\mby_0) \ind{|g(\mby_0)|>u_n s}
      \right] < \infty \; .  \label{eq:def-Q}
    \end{align}
  \end{enumerate}
\end{hypothesis}
Under \Cref{hypo:drift-small}, it is well known that the chain $\{\mby_j\}$ is irreducible and
geometrically ergodic and $\esp[V(\mby_0)]<\infty$. This implies that the chain $\{\mby_j\}$ and the
sequence $\{X_j\}$ are $\beta$-mixing and there exists $c>0$ such that $\beta_n=O(\rme^{-cn})$,
where $\{\beta_n,n\geq 1\}$ is the $\beta$-mixing coefficients sequence; see
\cite[Theorem~3.7]{bradley:2005}. This is a very strong requirement. However, it is satisfied by
many time series models.  We will provide in \Cref{sec:two-models} a fairly general methodology to
check~\Cref{hypo:drift-small}.

\subsection{Convergence of tail arrays sums}
\label{sec:tail-arrays}
Throughout the paper we will write $\bx_{a,b}$ for $(x_a,\dots,x_b)$, $a\leq b\in\Zset$, for any
 sequence $\bx=(x_j)_{j\in\Zset}$. For $q\geq0$, let $\mcl_q$ be the space of measurable functions $\phi$ defined on $\Rset^{h+1}$ such
that
\begin{enumerate}[(i)]
\item there exists a constant $\epsilon>0$ such that
  $|\phi(\vectorbold{x})| \leq \epsilon^{-1} (|\vectorbold{x}|^{q}\vee1) \ind{|\bx|>\epsilon}$
   for $\vectorbold{x}\in \Rset^{h+1} $;
\item for all $j\geq0$, the function $\bx_{0,j+h}\mapsto\phi(\bx_{j,j+h})$ is almost surely
  continuous with respect to $\numultseq{0,j+h}$.
\end{enumerate}
Note that $\mcl_q\subset\mcl_{q'}$ if $q \leq q'$. As an important example, the continuity condition is
satisfied for functions of the form $\bx\mapsto \psi(\bx)\1{(-\infty,\bu]^c}(\bx)$, for
$\bu\in[\vectorbold{0},\vectorbold{\infty})^{h+1}\setminus\{\vectorbold{0}\}$ and a continuous
function $\psi$, because of the homogeneity of the exponent measures.

\begin{lemma}
  \label{lem:cov}
  Under \Cref{hypo:drift-small}, for  $\phi\in\mcl_{q_0/2}$,
  \begin{align}
    \int_{\Rset^{h+1}}
    \phi^2(\vectorbold{x})\numultseq{0,h} (\rmd \vectorbold{x}) +
    \sum_{j=1}^\infty \int_{\Rset^{j+h+1}} |\phi(\vectorbold{x}_{0,h}) \phi(\vectorbold{x}_{j,j+h})|
    \numultseq{0,j+h} (\rmd \vectorbold{x}) < \infty \; .
  \end{align}
\end{lemma}
This lemma will be proved in~\Cref{sec:anticluster-psi}.  Therefore, for $\phi,\phi'\in\mcl_{q_0/2}$, we
can define
\begin{align}
  \sigma^2(\phi) &  = \int_{\Rset^{h+1}} \phi^2(\vectorbold{x})\numultseq{0,h} (\rmd \vectorbold{x})
  + 2 \sum_{j=1}^\infty \int_{\Rset^{j+h+1}} \phi(\vectorbold{x}_{0,h}) \phi(\vectorbold{x}_{j,j+h})
  \numultseq{0,j+h} (\rmd \vectorbold{x}) \; , \label{eq:var-phi} \\
  C(\phi,\phi') & = \frac12\{\sigma^2(\phi+\phi') - \sigma^2(\phi) - \sigma^2(\phi')\} \; .
  \label{eq:covariance-psi}
\end{align}
Let $\TEPmult$ be a Gaussian process indexed by $\mcl_{q_0/2}$ with covariance function $C$. Our
main result is the finite dimensional convergence of the sequence of the processes $\TEPmult_n$
defined in~(\ref{eq:def-tepmult}) and indexed by $\mcl_q$ for $q< q_0/2$. The proof is given in
\Cref{sec:proof-tep-markov}.

\begin{theorem}
  \label{theo:fidi}
  Let \Cref{hypo:drift-small} hold and let $\{u_n\}$ be an increasing sequence such that
  \begin{align}
    \label{eq:un}
    \lim_{n\to\infty} u_n  = \lim_{n\to\infty}n\tail{F}(u_n) = + \infty\;
  \end{align}
  and there exists $\eta>0$ such that
  \begin{align}
    \lim_{n\to\infty} \log^{1+\eta} (n) \bar{F}(u_n) & = 0 \; . \label{eq:condition-un-eta}
  \end{align}
  Assume moreover that either $q=0$ or there exists $\delta>0$ such that $q(2+\delta) \leq q_0$ and
  \begin{align}
    \lim_{n\to\infty} \frac{\log^{1+\eta} (n)} {\{n\bar{F}(u_n)\}^{{\delta/2}}} & = 0 \; . \label{eq:missing-condition}
  \end{align}
    Then $\TEPmult_{n} \fidi \TEPmult$ on $\mcl_q$.
\end{theorem}

\subsubsection*{Comments on the assumptions}
\Cref{theo:fidi} is obtained under \Cref{hypo:drift-small} and the very mild restrictions
(\ref{eq:un}), (\ref{eq:condition-un-eta}) and~(\ref{eq:missing-condition}) on the choice of the
sequence $u_n$. This simplicity is due to the Markovian assumption. In the literature on the fidi
convergence of tail array sums for general mixing time series, assumptions are usually more
involved. We briefly review some of them.

In the first place note that the geometric drift condition implies that the $\beta$-mixing
coefficients decay geometrically fast. This allows to apply the blocking method for the proof with
blocks of size $r_n$ of order $\log^{1+\eta}(n)$. Even though $\beta$-mixing is restrictive, it is a
commonly made assumption and conditions which ensure for $\beta$ mixing are well
known. Geometric ergodicity may also be considered restrictive since it excludes many Markov
  chains, but as will be illustrated in \Cref{sec:counterexample}, non geometrically ergodic Markov
  chains may have a non standard extremal behaviour and in particular may have a vanishing extremal
  index. See \eg\ \cite{roberts:rosenthal:segers:sousa:2006}.

In the literature, the convergence of the variance of the sum within one block is often assumed:
e.g. \cite[Theorem~4.1]{rootzen:leadbetter:dehaan:1998} or \cite{rootzen:2009}.  For the tail
empirical process~(\ref{def:usual-tep}) and other bounded functions which vanish in a neighborhood
of zero, the following condition, introduced by \cite{smith:1992}, has been used:
\begin{align}
  \label{eq:smithconditionS}
  \lim_{m\to\infty} \limsup_{n\to\infty} \sum_{j=m}^{r_n} \pr(X_j>u_ns\mid X_0>u_ns) = 0 \; .
\end{align}
See for instance \cite{drees:segers:warchol:2014} whose Condition C is equivalent
to~(\ref{eq:smithconditionS}).  For sums of unbounded functions (which vanish in a neighborhood of
zero), ad hoc conditions are usually assumed to ensure convergence of the block variance and the
Lindeberg asymptotic negligibility condition for the central limit theorem; see for instance
\cite[Condition~4.2]{leadbetter:rootzen:dehaan:1998} and \cite[Condition~3.15]{drees:rootzen:2010}.
In this paper, we consider an extension of~(\ref{eq:smithconditionS}) to unbounded functions. We
will prove in \Cref{prop:conditiondhs-via-drift} that if $\phi\in\mcl_q$ for $q\leq q_0/2$, then
\Cref{hypo:drift-small} yields
\begin{align}
  \label{eq:smithconditionSextended}
  \lim_{m\to\infty} \limsup_{n\to\infty} \sum_{j=m}^{r_n} \frac{\esp[|\phi((X_0\dots,X_{h})/u_n)|
  |\phi((X_j,\dots,X_{j+h})/u_n)|]}{\bar{F}(u_n)} = 0 \; .
\end{align}
This property in turn will allow to prove convergence of the variance and also some technical
  conditions related to tightness in the functional central limit theorem. Again, this is a
  consequence of geometric ergodicity which may seem to be a high price to pay, but on the other hand the
  condition~(\ref{eq:smithconditionS}) does not hold for the example of \Cref{sec:counterexample}.

\subsection{Statistical applications}
\label{sec:statapp}
For statistical purposes, we will consider the process $\TEPmult_n$ indexed by a class $\mcg$ of
functions and convergence of $\TEPmult_n$ to $\TEPmult$ must be strengthened to weak convergence in
$\ellinfty(\mcg)$, in particular in order to replace the deterministic threshold $u_n$ by an
appropriate sequence of order statistics.  The general theory of weak convergence in
$\ellinfty(\mcg)$ is developed in \cite{vandervaart:wellner:1996} and \cite{gine:nickl:2016} and was
adapted in full generality in the context of cluster statistics in \cite{drees:rootzen:2010}.
We give here an illustration adapted from \cite[Theorem~2.11.1]{vandervaart:wellner:1996}. We
  state it under the simplifying assumption of linear ordering since it is sufficient for the
  forthcoming examples.  More sophisticated examples can be treated as in \cite{drees:rootzen:2010}
  using entropy conditions but are beyond the scope of this paper.

We say that a class $\mcg$ of functions is pointwise separable
(cf. \cite[Section~2.3.3]{vandervaart:wellner:1996}) if there exists a countable subclass
$\mcg_0\subset\mcg$ such that every $g\in\mcg$ is the pointwise limit of a sequence in $\mcg_0$.

Let $\rho_h$ be the pseudometric defined on $\mcl_q$ by
\begin{align*}
  \rho_h^2(\phi,\psi)  = \numultseq{0,h}((\phi-\psi)^2) \; .
\end{align*}
Note that $\rho_h$ is well defined under the assumptions of \Cref{theo:fidi} which imply $q<q_0/2$.

\begin{theorem}
  \label{theo:our-entropy}
  Let the assumptions of \Cref{theo:fidi} hold and let $\mcg\subset\mcl_q$.  Assume moreover that
  \begin{enumerate}[(i)]
  \item\label{item:pointwise-separable}  $\mcg$ is pointwise separable and linearly ordered;
  \item \label{item:envelope-our-entropy} the envelope function
    $\Phi_\mcg=\sup_{\phi\in\mcg} |\phi|$ is in $\mcl_q$;
  \item \label{item:totally-bounded}$(\mcg,\rho_h)$ is totally bounded;
  \item \label{item:continuity} for every sequence $\{\delta_n\}$ which decreases to zero,
    \begin{align}
      \label{eq:continuite-pour-notre-theorem}
      \limsup_{n\to\infty} \sup_{\phi,\psi\in\mcg \atop \rho_h(\phi,\psi)\leq \delta_n}
      \frac{\esp[\{\phi(u_n^{-1}\bX_{0,h})-\psi(u_n^{-1}\bX_{0,h})\}^2]}{\bar{F}(u_n)} = 0  \; .
    \end{align}
  \end{enumerate}
Then $\TEPmult_n\convweak \TEPmult$ in $\ellinfty(\mcg)$.
\end{theorem}
The proof is in \Cref{sec:prooftheofuncmarkov}.

In order to obtain convenient expressions for the limiting variances, we consider the tail process
$\{Y_j,j\in\Zset\}$, introduced in \cite{basrak:segers:2009} and defined as the weak limit (in the
sense of finite dimensional distributions) of the sequence $\{X_j/x,j\in\Zset\}$ given that
$|X_0|>x$, as $x\to\infty$: for $i\leq j \in \Zset$,
\begin{align}
  \pr((Y_{i},\dots,Y_j) \in \cdot) & = \lim_{x\to\infty} \pr(x^{-1}(X_{i},\dots,X_j) \in \cdot
  \mid |X_0|>x) \; .  \label{eq:def-tailprocess}
\end{align}
Then $|Y_0|$ is a Pareto random variable with tail index $\alpha$.  The spectral tail process
$\{\Theta_j,j\in\Zset\}$ is then defined as $\Theta_j=|Y_0|^{-1}Y_j$.

As a first corollary of \Cref{theo:our-entropy}, we obtain functional convergence of the univariate
tail empirical process. Note that contrary to most of the related literature, we do not have
additional conditions to ensure existence of the limiting variance or tightness. Geometric
ergodicity of the underlying Markov chain is sufficient.
\begin{corollary}
  \label{coro:tepusual}
  Let the assumptions of \Cref{theo:fidi} hold. Let $0<s_0<1<t_0$ and assume moreover that
  \begin{align}
    \label{eq:def-tep-bias-uniform}
    \lim_{n\to\infty}\sqrt{n\tail{F}(u_n)}
    & \sup_{s_0\leq s \leq t_0} \left| \frac{\tail{F}(u_n s)}{\tail{F}(u_n)} - s^{-\alpha}\right| \; =0 \; .
  \end{align}
  Then,
  \begin{align}
    \label{eq:convergence-tep}
    \sqrt{n\tail{F}(u_n)}\left\{\frac{1}{n\tail{F}(u_n)}\sum_{j=1}^n\ind{X_j> u_n s}-s^{-\alpha}\right\}\convweak \mathbb{W} \; ,
  \end{align}
  in $\ell_{\infty}([s_0,t_0])$, where $\mathbb{W}$ is a Gaussian process with covariance function
  \begin{align*}
    \cov(\mathbb{W}(s),\mathbb{W}(t))=(s\vee t)^{-\alpha} + \sum_{j=1}^{\infty} \esp[\{(\Theta_j/t)\wedge(1/s)\}_+^\alpha +
    \{(\Theta_j/s)\wedge(1/t)\}_+^\alpha \mid \Theta_0=1]\;.
  \end{align*}
\end{corollary}

Assume that $F$ is continuous and let $k=k(n)$ be an intermediate sequence of integers, that is
$\lim_{n\to\infty} k=\lim_{n\to\infty} n/k=\infty$. Assume that the sequence $u_n$ is such that
$k=n\bar{F}(u_n)$. Let $X_{n;1}\leq \cdots\leq X_{n:n}$ be the increasing order statistics.
Consequently, by Vervaat's lemma (\citet[Proposition~3.3]{resnick:2007}) we obtain that
$X_{n:n-k}/u_n\convprob1$. For statistical applications, the deterministic threshold $u_n$ will be
replaced by $X_{n:n-k}$.  Define the processes $\widehat{\TEPmult}_n$ and $\widehat{\TEPmult}$ on
$\mcl_q$ by
\begin{align*}
  \widehat{\TEPmult}_n(\phi) & =  \sqrt{k} \left\{\frac1{{k}}\sum_{i=1}^n \phi(X_{n:n-k}^{-1}\bX_{j,j+h}) - \numultseq{0,h}(\phi)\right\} \; , \\
  \widehat{\TEPmult}(\phi) & = \TEPmult(\phi) - \numultseq{0,h}(\phi)\TEPmult(\1{(1,\infty)\times\Rset^h}) \; .
\end{align*}
\begin{corollary}
  \label{coro:randomthreshold}
  Let the assumptions of \Cref{theo:fidi} and~(\ref{eq:def-tep-bias-uniform}) hold. Let
  $0<s_0<1<t_0$ and let $\mcg_0\subset\mcl_q$. Define
  $\mcg = \{\phi_s,\phi\in\mcg_0,s\in[s_0,t_0]\}$ with $\phi_s(\bx) = \phi(\bx/s)$. If $\mcg$
  satisfies the assumptions of~\Cref{theo:our-entropy} and
  \begin{align}
    \lim_{n\to\infty} \sqrt{k}  \sup_{s_0\leq s \leq t_0} \sup_{\phi\in\mcg}\left| \frac{\esp[\phi(\bX_{0,h}/(u_ns))]}{\tail{F}(u_n)}
    - s^{-\alpha}\numultseq{0,h}(\phi)\right| = 0 \; , \label{eq:bias-psi}
  \end{align}
  then $ \widehat{\TEPmult}_n \convweak \widehat{\TEPmult}$ on $\ellinfty(\mcg_0)$.
\end{corollary}

\begin{remark}
  The term $\numultseq{0,h}(\phi)\TEPmult(\1{(1,\infty]\times\Rset^h})$ is an effect of the random threshold.
  Conditions~(\ref{eq:def-tep-bias-uniform}) and (\ref{eq:bias-psi}) allow to get rid of the bias
  terms.  These conditions are certainly fulfilled for {some} choices of $k$ but they might be in
  conflict with (\ref{eq:missing-condition}) if the convergence in the definition of regular
  variation is very slow.  However, conditions (\ref{eq:def-tep-bias-uniform}) and
  (\ref{eq:bias-psi}) are generally obtained by means of so-called second order conditions which
  yield polynomial rates of convergence.  We do not pursue in this direction, nor in the very
  important practical issue of a data-driven choice of $k$, both problems being largely beyond the
  scope of this paper.
\end{remark}

The proof of \Cref{coro:tepusual,coro:randomthreshold} is in \Cref{sec:prooftheofeasible}. We now
give several examples.

\begin{example}[{Estimation of the (cluster) large deviation index}]
  \label{sec:cluster}
Assume for simplicity that the random variables $X_j$ are nonnegative.  For $A_h
= \{x_0+\cdots+x_h>1\}$ we consider the following quantity which exists by regular variation:
\begin{align*}
  b_{+,h} = \frac1{h+1}\lim_{x\to\infty} \frac{\pr(X_0+\cdots+X_h>x)}{\tail{F}(x)} =
  \frac{1}{h+1}\numultseq{0,h}(A_h)=\frac{1}{h+1}\int_{A_h}\numult{0,h}(\rmd x) \; .
\end{align*}
If $\{X_j\}$ is a sequence of \iid\ regularly varying nonnegative random variables, then $b_{+,h}=1$
for all $h$. In general, regular variation implies that $(h+1)^{-1} \leq b_{+,h} \leq (h+1)^\alpha$.
It is shown in \cite{mikosch:wintenberger:2014} that the drift condition~(\ref{eq:drift}) implies
that
\begin{align*}
  b_+= \lim_{h\to\infty} b_{+,h} \in [0,\infty) \; .
\end{align*}
The quantity $b_+$ thus defined is called the {cluster index} and is related to the large deviation
behavior of the partial sums $\sum_{j=1}^n X_j$; see \cite{mikosch:wintenberger:2014}.  No
estimators of $b_+$ have been provided yet in the literature. Here we consider an estimator of
$b_{+,h}$. Define
\begin{align*}
  \widehat{b}_{n,+,h} & = \frac1{k(h+1)} \sum_{j=1}^{n} \ind{X_{j}+\cdots+X_{j+h}>\orderstat{n:n-k}} \; .
\end{align*}
With the notation of \Cref{coro:randomthreshold}, $b_{+,h}=\numultseq{0,h}(\phi)$ with
$\phi(\vectorbold{x})=(h+1)^{-1}\ind{x_0+\cdots+x_h>1}$.  The class $\mcg_0$ consists of the
single function $\phi$ and $\mcg=\{\phi_s,s\in[s_0,t_0]\}$. The class $\mcg$ is pointwise separable,
linearly ordered and its envelope function is $\phi_{s_0}$ which belongs to $\mcl_0$. Conditions
\ref{item:totally-bounded} and \ref{item:continuity} of \Cref{theo:our-entropy} are checked in
\Cref{sec:proof-clusterindex}, while \eqref{eq:bias-psi} becomes
\begin{align}
  \lim_{n\to\infty} \sqrt{k} \sup_{s\geq s_0} \left| \frac{\pr(X_0+\cdots+X_h>u_ns)}{\tail{F}(u_n)}
  - s^{-\alpha}\numultseq{0,h}(A_h) \right| = 0 \; .   \label{eq:bias-clusterindex}
\end{align}
Therefore, under the assumptions of \Cref{theo:fidi} with $q=0$ and~(\ref{eq:bias-clusterindex}), we
can apply~\Cref{coro:randomthreshold} and we obtain
\begin{align*}
  \sqrt{k}(\widehat{b}_{n,+,h} - b_{+,h}) \convdistr N(0,\sigma^2_{+,h}) \;  ,
\end{align*}
with
\begin{align*}
  \sigma_{+,h}^2 & = b_{+,h} \{b_{+,h}-1/(h+1)\} + 2 \sum_{j=1}^\infty \left\{
    \sum_{i=0}^h (h+1)^{-\alpha-2} \times \right. \\
  & \phantom{ = }\pr(Y_{-i}\leq1,\dots,Y_{-1}\leq1,  Y_{-i}+\cdots+Y_{h-i}>h+1,Y_{j-i}+\cdots+Y_{j+h-i}>h+1) \\
  &- \frac{b_{+,h}}{h+1} \{ \pr(Y_j+\cdots+Y_{j+h}>1) + \pr(Y_{-j} + \cdots + Y_{-j+h} > 1)\} +
  (b_{+,h})^2 \pr(Y_j>1) \Bigr\} \; .
\end{align*}
\end{example}

\begin{example}[{Conditional tail expectation}]
\label{sec:expected-shortfall}
We assume for simplicity that the time series $\{X_j\}$ is non negative and has tail
index $\alpha>1$.  Then the following limit exists:
\begin{align}
  \label{eq:cte-again}
  \lim_{x\to\infty}\frac{1}{x}\esp[X_h\mid X_0 >x]
  & = \int_{x_0=1}^\infty \int_{\Rset_+^h}  x_h \, \numultseq{0,h}(\rmd \vectorbold{x})
    = {\esp[Y_h] = \frac{\alpha\esp[\Theta_h]}{\alpha-1}} = \CTE_h \; .
\end{align}
Indeed, regular variation implies that the distribution of $x^{-1}(X_0,\dots,X_h)$ conditionally on
$X_0>x$ converges weakly to the probability measure equal to $\numultseq{0,h}$ restricted to
$[1,\infty)\times\Rset_°^h$ and if $\alpha>1$, Potter's bounds ensure that $x^{-1}X_h$ is uniformly
integrable conditionally on $X_0>x$.  Define
\begin{align*}
  \widehat{C}_{n,h} = \frac{1}{k} \sum_{j=1}^{n} \frac{X_{j+h}}{X_{n:n-k}} \ind{X_j>X_{n:n-k}} \; .
\end{align*}
The bias condition~(\ref{eq:bias-psi}) becomes
\begin{align}
  \label{eq:def-tep-bias-linear}
  \lim_{n\to\infty}\sqrt{k} \sup_{s_0\leq s\leq t_0} \left| \frac{\esp\left[X_h\ind{X_0>u_ns}\right]}{u_n\tail{F}(u_n)} - \CTE_hs^{1-\alpha}  \right| =0 \; .
\end{align}
In order to apply \Cref{coro:randomthreshold}, we assume that $\alpha>2$. We set
$\phi(\bx)=x_h\ind{x_0>1}$, $\mcg_0=\{\phi\}$ and $\mcg=\{\phi_s,s\in[s_0,t_0]\}$ with
$0<s_0<1<t_0$. The class $\mcg$ is pointwise separable, linearly ordered and its envelop function is
$s_0^{-1}x_h\ind{x_0>s_0}$ which belongs to $\mcl_2$; it is totally bounded for the metric
$\rho_h$ and Condition~(\ref{eq:continuite-pour-notre-theorem}) holds (see \Cref{sec:proof-shorfall}
for a proof of the latter two points).

Thus we obtain, under the assumptions of \Cref{theo:fidi} with $q=2$,
\begin{align*}
  \sqrt{k} \left\{ \widehat{C}_{n,h} - \CTE_h \right\} \convdistr N(0,\sigma_h^2) \; ,
\end{align*}
with
\begin{align*}
  \sigma_h^2 = \esp[(Y_h&-\CTE_h)^2]  + 2 \sum_{j=1}^\infty \esp[(Y_h-\CTE_h)(Y_{j+h}-\CTE_h)\ind{Y_j>1}] \; .
\end{align*}
\end{example}

\begin{example}[{Estimation of the distribution of the spectral tail process}]
\label{sec:edfstp}

For $h>0$, let $\dfstp_h$ be the distribution function of the spectral tail process $\Theta_h$ at lag $h$,
\begin{align*}
  \dfstp_h(y) = \lim_{x\to\infty} \frac{\pr(|X_0|>x,X_h \leq |X_0|y)}{\pr(|X_0|>x)}\;, \ \ y\in\Rset\;.
\end{align*}
An estimator $\widehat\dfstp_{n,h}$ of $\dfstp_h$ is defined by
\begin{align}
  \widehat\dfstp_{n,h} (y) = \frac1k \sum_{i=1}^n \ind{|X_j|>X_{n:n-k}} \ind{X_{j+h}\leq |X_j|y} \; ,
\end{align}
where $k$ is a non decreasing sequence.  For $y\in\Rset$, define the function $J_y$ on $\Rset^{h+1}$
by
\begin{align*}
  J_{y}(\bx) = \ind{|x_0|>1}\ind{x_h\leq |x_0| y} \; ,
\end{align*}
and set $\limdfstp_h(y)=\TEPmult(J_{y})-L_h(y)\TEPmult(\1{(1,\infty]\times \Rset^h})$.
\end{example}
\begin{theorem}
  \label{theo:fclt-edfstp}
   Let the assumptions of \Cref{theo:fidi} with $q=0$ and (\ref{eq:def-tep-bias-uniform}) hold.
    Assume that the distribution function $\dfstp_h$ of $\Theta_h$ is continuous on $[a,b]$ with
    $a<b\in\Rset$ and  for each $y\in [a,b]$,
  \begin{align}\label{eq:bias-spectral}
    { \lim_{n\to\infty} \sqrt{k} \sup_{s_0\leq s \leq t_0}
    \left| \frac{\pr(|X_0|>u_ns,X_h\leq |X_0|y)}{\bar{F}(u_n)} - s^{-\alpha} \dfstp_h(y)\right|=0 } \; ,
  \end{align}
  where $u_n$ is such that $k=n\bar{F}(u_n)$.  Then
  $\sqrt{k}(\widehat\dfstp_{n,h}-L_h)\fidi\limdfstp_h$ on $[a,b]$.
\end{theorem}
The proof is in \Cref{sec:proof-fclt-edfstp}.  We only consider fidi convergence since a proof of
tightness for a two-parameter process would be much more technical since the linearly ordered
property of the class of functions would fail to hold. Under additional assumptions on the distribution
of the spectral tail process and if the bias condition~(\ref{eq:bias-spectral}) holds uniformly  on $ [a,b]$, tightness with respect to $y$ could be proved by the same techniques
as in \cite[Example~4.4]{drees:rootzen:2010} for a plausibly slower rate of convergence.

Consider a univariate non negative Markov chain which in addition to \Cref{hypo:drift-small}
satisfies $\Theta_{j+1}=A_{j+1} \Theta_j$, $j\geq0$ where $\{A_j,j\geq1\}$ is a sequence of \iid\
random variables with the same distribution as $\Theta_1$. This is the case for most usual Markovian
time series, see \cite{janssen:segers:2014}.  For $h=1$, we can calculate
\begin{align*}
  \var(\limdfstp_1(y)) = \pr(\Theta_1\leq y)\pr(\Theta_1> y) \; .
\end{align*}
This is the same as the variance found in \cite[Corollary~5.2]{drees:segers:warchol:2014} for their
forward estimator which instead of using a random threshold replaces the scaling $k$ by the random
number of exceedances of the Markov chain $\{X_j\}$ above the  deterministic threshold $u_n$.

\subsection{Checking \Cref{hypo:drift-small}}
\label{sec:two-models}
We now show how to check \Cref{hypo:drift-small}. Irreducibility and the drift
condition~(\ref{eq:drift}) are well known for most Markovian time series models, but the link with
the conditions of extreme value theory has not been yet fully
investigated. \cite{janssen:segers:2014} used the functional autoregressive representation of most
Markov chains to obtain conditions for the whole time series to be regularly varying when the
stationary distribution is regularly varying. We build on this approach to check
\Cref{hypo:drift-small}.  Assume that $\{\mby_j,j\in\Nset\}$ is a $\Rset^d$-valued Markov chain which
admits a functional autoregressive representation
\begin{align}
  \label{eq:FAR}
  \mby_{j+1} = \Phi(\mby_j,Z_{j+1}) \; , \ \ j\geq0 \; ,
\end{align}
where $\{Z,Z_j,j\geq1\}$ is a sequence of \iid\ random variables with values in a measurable space
$\Eset$ and $\Phi: \Rset^d \times \Eset\to \Rset^d$ is a measurable map.  Fix a norm $\|\cdot\|$ and
denote the unit sphere by $\sphere{d-1}$. We assume that there exist $q_0<\alpha$, $ \zeta_1 , \zeta_2>0$ and a map
$V:\Rset^d\to[1,\infty)$ such that
\begin{subequations}
  \label{eq:other-conditions}
  \begin{align}
    \label{eq:V-normliike}
    & \zeta_1 (\|\bx\|\vee1)^{q_0} \leq V(\bx) \leq \zeta_2 (\|\bx\|\vee1)^{q_0} \; , \\
    & \sup_{\bx\in\Rset^d} \|\bx\|^{-q_0}\esp[\|\Phi(\bx,Z) \|^{q_0}] <\infty \; ,  \label{eq:sinul}\\
    \label{eq:asymptotic-contractivity}
    &  \limsup_{\|\bx\|\to\infty}  \frac{\esp\left[V(\Phi(\bx,Z)) \right]}{V(\bx)} < 1 \; .
  \end{align}
\end{subequations}
Under these conditions,~(\ref{eq:drift}) holds. Indeed, under (\ref{eq:asymptotic-contractivity}),
we can choose $\gamma\in(0,1)$ such that for $r$ sufficiently large
\begin{align*}
  \sup_{\|\bx\|>r} \frac{\esp[V(\Phi(\bx,Z))]}{V(\bx)} < \gamma \; .
\end{align*}
 The upper bounds in~(\ref{eq:V-normliike}) and~(\ref{eq:sinul}) ensure that
  $\esp[V(\Phi(\bx,Z))]$ is bounded on compact sets, and the lower bound in~(\ref{eq:V-normliike})
  ensures that $V$ is unbounded outside compact sets; thus (\ref{eq:drift}) holds.

In most examples, $\{\mby_j\}$ is itself a regularly varying time series and $g$ is a homogeneous
function, so that $\{X_j\}$ is regularly varying and (\ref{eq:def-Q}) holds by the conditions
$|g|^{q_0}\leq cV$ and~(\ref{eq:V-normliike}).
We now consider two examples.

\subsubsection*{AR(p) with regularly varying innovations}

Convergence of the tail empirical processes of exceedances for infinite order moving averages has
been obtained in the case of finite variance innovation; for infinite variance innovations it was
proved only in the case of an AR(1) process in \cite{drees:2003}. We next show that
\Cref{hypo:drift-small} holds for general causal invertible AR($p$) models.
\begin{corollary}
  \label{cor:arp}
  Assume that $\sequenceshort{X}{j}$ is an AR($p$) time series
  \begin{align*}
    X_j=\varphi_1X_{j-1}+\cdots+\varphi_pX_{j-p}+\varepsilon_j\;,\ \ j\geq 1\;,
  \end{align*}
  that satisfies the following conditions:
  \begin{itemize}
  \item $\sequenceshort{\varepsilon}{j}$ is a sequence of \iid\ random variables, regularly varying
    with index $\alpha$ whose common density possesses an absolutely continuous component;
  \item the spectral radius of the matrix
    \begin{align*}
      A = \left(
        \begin{array}{ccccc}
          \varphi_1 & \varphi_2 &\varphi_3 &  \cdots &\varphi_p\\
          1 & 0 & 0 & \cdots & 0\\
          0 & 1 & 0 & \cdots & 0\\
          \vdots & \vdots &\vdots & \ddots & \vdots\\
          0 & 0 & \cdots & 1 & 0
        \end{array}
      \right)\;
    \end{align*}
    is smaller than 1.
  \item if $\alpha\leq 2$, then $\sum_{i=1}^p|\varphi_i|^r<1$ for $r=\min\{1,\alpha\}$.
  \end{itemize}
  Let $\{\mby_j,j\geq 1\}$ be the $\Rset^p$-valued vector-autoregressive Markov chain
  \begin{align}
    \label{eq:var}
    \mby_j=A \mby_{j-1}+Z_j
  \end{align}
  with
  \begin{align*}
    \mby_j=(X_j,\ldots,X_{j-p+1})^T\;, \ \ \ \mbz_j=(\varepsilon_j,0,\ldots,0)^T\;.
  \end{align*}
  Then there exists a norm $\|\cdot\|$ on $\Rset^p$ such that \Cref{hypo:drift-small} holds with
  $V(\bx) = 1+\|\bx\|^{q_0}$ for any $q_0<\alpha$.
\end{corollary}
\begin{proof}
  Under the stated assumptions, the Markov chain $\{\mby_j,j\geq1\}$ is positive Harris recurrent on
  $\Rset^p$, all compact sets are small sets, see \cite[Example 2.6 (d)]{alsmeyer:2003}; the
  stationary distribution is given by $\mby_j = \sum_{k=0}^\infty A^kZ_{j-k}$ and is regularly
  varying, see \cite{hult:samorodnitsky:2008}.  The $AR(p)$ process admits the
  representation~(\ref{eq:FAR}) with $\Phi$ given by
  \begin{align*}
    \Phi(\bx,\bz) = A\bx+\bz \; .
  \end{align*}
  Therefore (\ref{eq:V-normliike}) and~(\ref{eq:sinul}) hold for $V=1+\|\cdot\|^{q_0}$ for any norm
  $\|\cdot\|$ on $\Rset^p$ with $\limitmap(\bx,\bz)=A\bx$ and for any $q_0$ such that
  $\esp[|\varepsilon_0|^{q_0}]<\infty$.  We must show that there exists a norm such that
  condition~(\ref{eq:asymptotic-contractivity}) is fulfilled. Let $\lambda$ be the spectral radius
  of the matrix $A$. Fix $\epsilon$ such that $\gamma=\lambda+\epsilon<1$. Then there exists a
  norm (depending on $A$) such that
 \begin{align*}
   \sup_{\bx\in\Rset^p\atop \|\bx\|_p=1} \|A \bx\| \leq \gamma \; ;
 \end{align*}
 see e.g.  \cite[Proposition~4.24 and Example~6.35]{douc:moulines:stoffer:2014}.  This
 yields~(\ref{eq:asymptotic-contractivity}) with $V(\bx) = 1+\|\bx\|^{q_0}$.
\end{proof}

\subsubsection*{Threshold ARCH}
\label{sec:tarch}

We consider the Threshold-ARCH model. It was proved to have a regularly varying stationary
distribution by \cite{cline:2007}. We show here that it satisfies \Cref{hypo:drift-small}.
\begin{corollary}
  \label{cor:tarch}
  Let $\xi\in\Rset$. Assume that $\{X_j\}$ follows a Threshold-ARCH model,
  \begin{align}
    \label{eq:tarch}
    X_j= (b_{10}+b_{11} X_{j-1}^2)^{1/2}Z_j\ind{X_{j-1}<\xi} +  (b_{20}+b_{21} X_{j-1}^2)^{1/2}Z_j\ind{X_{j-1}\geq\xi}\;,
  \end{align}
  that satisfies the following conditions:
  \begin{itemize}
  \item $b_{10},b_{11},b_{20},b_{21}> 0$;
  \item $\{Z_j,j\in\Zset\}$ is a sequence of \iid\ random variables such that
    $\esp[|Z_1|^{\beta}]<\infty$ for all $\beta>0$;
  \item the distribution of $Z_1$ has a bounded density with respect to Lebesgue's measure not
    vanishing in a neighbourhood of zero;
  \item $\esp[\log\{|Z_1|(\sqrt{b_{11}}\ind{Z_1<0}+\sqrt{b_{21}}\ind{Z_1\geq0})\}]<0$.
  \end{itemize}
  Then the Markov chain $\{X_j\}$ is an irreducible and aperiodic chain;
  its stationary distribution is regularly varying with index $\alpha$
  obtained by solving
  \begin{align}
   \label{eq:tail-tarch}
    b_{11}^{\alpha/2} \esp[|Z_1|^{\alpha} \ind{Z_1<0}] + b_{21}^{\alpha/2} \esp[|Z_1|^{\alpha} \ind{Z_1\geq 0}] = 1 \; .
  \end{align}
  \Cref{hypo:drift-small} holds with
  $V(x) = 1+|x|^{q_0}(b_{11}^{q_0/2}\ind{x<0}+b_{21}^{q_0/2}\ind{x\geq0})$ for any $q_0<\alpha$.
\end{corollary}

\begin{proof}
  The statements about ergodicity and the tail of the marginal distribution are proved in
  \cite{cline:2007}. The chain has the representation (\ref{eq:FAR}) with
  \begin{align*}
    \Phi(x,z) =  (b_{10}+b_{11} x^2)^{1/2}z\ind{x<\xi} +  (b_{20}+b_{21} x^2)^{1/2}z\ind{x\geq\xi} \; .
  \end{align*}
  Thus (\ref{eq:V-normliike}) and (\ref{eq:sinul}) hold for all $q_0>0$.  For $q_0<\alpha$, set
  $\lambda_{q_0} =
  b_{11}^{q_0/2}\esp[|Z_1|^{q_0}\ind{Z_1<0}]+b_{21}^{q_0/2}\esp[|Z_1|^{q_0}\ind{Z_1\geq0}]$.
  Then (\ref{eq:tail-tarch}) guarantees that $\lambda_{q_0}<1$ and it is readily checked that
  \begin{align*}
    \lim_{|x|\to\infty} \frac{\esp[V(\Phi(x,Z_1))]}{V(x)} = \lambda_{q_0} <1\; .
  \end{align*}
  This proves~(\ref{eq:asymptotic-contractivity}).
\end{proof}

\subsection{A counterexample}
\label{sec:counterexample}
Considering the sum of an \iid\ regularly varying sequence and a non geometrically ergodic lighter
tailed Markov chain, one can easily see that the geometric drift condition is not a necessary
condition for the results of \Cref{sec:tail-arrays,sec:statapp} to hold. However, when the geometric
drift condition does not hold, it is easy to build counterexamples of non geometrically ergodic
Markov chains which exhibit a highly non standard behaviour of their tail empirical process. In
particular, their extremal index is 0. We now provide a toy example of such a non standard
behaviour.

Let $\sequence{Z}{j}{\Zset}$ be a sequence of \iid\ positive integer valued random variables with regularly
varying right tail with index $\tailz>1$. Define the Markov chain $\{X_j,j\geq0\}$ by the following
recursion:
\begin{align*}
  X_j =
  \begin{cases}
    X_{j-1} -1 & \mbox{ if } X_{j-1} > 1 \; , \\
    Z_j &  \mbox{ if } X_{j-1} = 1  \; .
  \end{cases}
\end{align*}
Since $\tailz>1$, the chain admits a stationary distribution $\pi$ on $\Nset$ given by
\begin{align*}
  \pi(n) = \frac{\pr(Z_0\geq n)}{\esp[Z_0]} \; ,\ \  n\geq 1 \; .
\end{align*}
To avoid confusion, we will denote the distributions functions of $Z_0$ and $X_0$ (when the initial
distribution is $\pi$) by $F_Z$ and $F_X$, respectively.  The tail $\tail{F}_X$ of the stationary
distribution is then regularly varying with index $\alpha=\tailz-1$, since it is given by
\begin{align}\label{eq:X-Z}
  \tail{F}_X(x) = \frac{\esp[(Z_0-[x])_+]}{\esp[Z_0]} \sim \frac{x\tail{F}_Z(x) }{ \tailz \esp[Z_0]}\;, \ \ x\to\infty \; .
\end{align}
We assume for simplicity that $\pr(Z_0=n)>0$ for all $n\geq1$; this implies that the chain is
irreducible and aperiodic and the state $\{1\}$ is a recurrent atom. The distribution of the return time
$\return{1}$ to the atom $\{1\}$, when the chains start from $\{1\}$, is the distribution of $Z_0$. Hence the
chain is not geometrically ergodic since under the assumption on $Z_0$,
$\esp_1[\kappa^{\tau_1}]=\esp[\kappa^{Z_0}]=\infty$ for all $\kappa>1$. Moreover, the extremal index
of the chain is 0, by an application of \cite[Theorem~3.2 and Eq.~(4.2)]{rootzen:1988}.

Let $\{u_n\}$ be a scaling sequence. Consider the tail empirical distribution function defined in
\eqref{def:usual-tep} and $T_n(s) = \esp[\widetilde{T}_n(s)] = \tail{F}_X(u_ns)/\tail{F}_{X}(u_n)$.
Let $\{a_n\}$ be a scaling sequence such that $\lim_{n\to\infty} n\pr(Z_0>a_n)=1$.
\begin{proposition}
  \label{prop:counterexample}
  \begin{itemize}
  \item If $\lim_{n\to\infty}n\tail{F}_Z(u_n) = 0$, then
    $\lim_{n\to\infty}\pr(\widetilde{T}_n(s)\ne0)=0$.
  \item If $\tailz\in(1,2)$ and $\lim_{n\to\infty}n\tail{F}_Z(u_n) = \infty$, then there exists a
    $\tailz$-stable random variable~$\Lambda$ such that for every $s>0$,
    $a_n^{-1}n\tail{F}_X(u_n) \{\widetilde{T}_n(s)-T_n(s)\}\convdistr\Lambda$.
  \item If $\tailz>2$, $\lim_{n\to\infty}n\tail{F}_Z(u_n)= \infty$ and $s_0>0$, then the process
    $s\to \sqrt{n \tail{F}_Z(u_n)}\{\widetilde{T}_n(s)-T_n(s)\}$ converges weakly in
    $\mathbb{D}([s_0,\infty))$ equipped with the Skorokhod $J_1$-topology to a centered Gaussian
    process $\widetilde\TEP$ with covariance function
    \begin{align*}
      C(s,t)=\frac{(\tailz+1) t^{1-\tailz}}{ \tailz (\tailz-1)}-\frac{ s t^{-\tailz}}{\tailz},\qquad
      s<t\,.
    \end{align*}
  \end{itemize}
\end{proposition}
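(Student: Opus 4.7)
The plan is to exploit the regenerative structure of the chain. State $1$ is a recurrent atom, and after each visit to $1$ the chain re-enters via a fresh \iid\ draw $Z_k$ and then deterministically decreases by $1$ until it returns to $1$. Writing $N_n$ for the number of excursions started in $[1,n]$, the contribution of the $k$-th excursion to the count of exceedances of $u_ns$ is exactly $(Z_k-\lceil u_ns\rceil)_+$, giving the decomposition
\begin{align*}
\sum_{j=1}^n \1{\{X_j>u_ns\}}=\sum_{k=1}^{N_n}(Z_k-\lceil u_ns\rceil)_++R_n(s),
\end{align*}
where the boundary remainder $R_n(s)\le(X_0-u_ns)_++(Z_{N_n+1}-u_ns)_+$ accounts for the at most two partial excursions straddling times $0$ and $n$. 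Setting $Y_{n,k}(s)=(Z_k-\lceil u_ns\rceil)_+$, each of the three parts of the proposition corresponds to a classical regime for the sum of the \iid\ variables $Y_{n,k}(s)$.

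For part (i), since $N_n\le n$, the event $\{\tilde T_n(s)\neq0\}$ forces either $X_0>u_ns$ or one of the at most $n$ fresh draws $Z_k$ to exceed $u_ns$. A union bound gives $\pr(\tilde T_n(s)\neq0)\le\tail{F}_X(u_ns)+n\tail{F}_Z(u_ns)$, and both terms vanish by regular variation and the hypothesis $n\tail{F}_Z(u_n)\to0$.

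For part (ii), because $n\tail{F}_Z(u_n)\to\infty$ and $\tail{F}_Z$ is regularly varying of index $-\beta$, the quantile scale $a_n$ satisfies $a_n/u_n\to\infty$, so for each $y>0$ one has $n\pr(Y_{n,1}(s)>a_ny)=n\tail{F}_Z(u_ns+a_ny)\to y^{-\beta}$. The triangular-array stable limit theorem for centred regularly varying summands then yields
\begin{align*}
a_n^{-1}\sum_{k=1}^{\lfloor n/\esp[Z_0]\rfloor}\bigl(Y_{n,k}(s)-\esp[Y_{n,1}(s)]\bigr)\convdistr\Lambda
\end{align*}
with $\Lambda$ a $\beta$-stable random variable. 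Replacing the deterministic sample size $\lfloor n/\esp[Z_0]\rfloor$ by the random $N_n$ contributes an error $O_P(|N_n-n/\esp[Z_0]|\esp[Y_{n,1}(s)])=O_P(\sqrt n\,u_n\tail{F}_Z(u_n))$, which is $o_P(a_n)$ by the renewal CLT and Karamata; a Wald-type identity then identifies $n\tail{F}_X(u_ns)$ as the correct centring up to $o(a_n)$, and $R_n(s)$ is of the order of a single $Y_{n,k}(s)$ and thus also $o_P(a_n)$.

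For part (iii), $\beta>2$ implies $\var(Y_{n,1}(s))\sim c_\beta(u_ns)^2\tail{F}_Z(u_ns)$ by Karamata, so the Lindeberg--Feller CLT applied to the \iid\ centred summands (with $N_n$ replaced by $\lfloor n/\esp[Z_0]\rfloor$) gives Gaussian convergence; the asymptotic $\tail{F}_X(u_n)\propto u_n\tail{F}_Z(u_n)$ converts the self-normalising factor $\sqrt{n\tail{F}_Z(u_n)}/[n\tail{F}_X(u_n)]$ into a scale producing a finite non-degenerate limit. The off-diagonal covariance $C(s,t)$ for $s<t$ is read off from the expansion $(Z-u_ns)(Z-u_nt)=(Z-u_nt)^2+u_n(t-s)(Z-u_nt)$ on $\{Z>u_nt\}$ combined with Karamata. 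The main obstacle will be upgrading finite-dimensional convergence to functional convergence in $\ell^\infty([s_0,\infty))$: the pointwise Lipschitz bound $Y_{n,k}(s)-Y_{n,k}(t)\le u_n(t-s)\1{\{Z_k>u_ns\}}$ for $s\le t$ combined with the monotonicity of $s\mapsto Y_{n,k}(s)$ reduces tightness to a Kolmogorov-type moment bound on the increments of $s\mapsto\sum_k Y_{n,k}(s)$, which together with uniform control of the random sample size $N_n$ is the delicate technical step.
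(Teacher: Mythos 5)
Your decomposition of the exceedance count into the \iid\ excursion contributions $(Z_k-\lceil u_ns\rceil)_+$ plus boundary terms is exactly the paper's starting point, and your parts (i) and (iii) follow the same route as the paper (union bound over at most $n$ excursions; Lindeberg--Feller together with $\tail{F}_X(u_n)\sim u_n\tail{F}_Z(u_n)/(\beta\esp[Z_0])$). However, part (ii) contains a genuine error: you control the replacement of the random index $N_n$ by $\lfloor n/\esp[Z_0]\rfloor$ via ``the renewal CLT'', claiming $|N_n-n/\esp[Z_0]|=O_P(\sqrt{n})$. For $\beta\in(1,2)$ the $Z_k$ have infinite variance, so there is no $\sqrt{n}$ renewal CLT: the fluctuations of $N_n$ are of stable order $n^{1/\beta}$ up to slowly varying factors, i.e.\ of order $a_n$ itself. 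The conclusion survives, because $\esp[(Z_0-\lceil u_ns\rceil)_+]=O(u_n\tail{F}_Z(u_n))=o(1)$ makes even an $O_P(a_n)$ index discrepancy contribute $o_P(a_n)$ to the centering part; but you must also control the sum of the nonnegative, uncentered $Y_{n,k}(s)$ over the random symmetric difference of index sets, which is not independent of $N_n$ --- that requires an Anscombe-type argument or a maximal inequality, not the one-line bound you give. The paper sidesteps the issue entirely: it writes $(Z_j-[u_ns])_+=Z_j-Z_j\wedge[u_ns]$, shows by a Karamata variance bound that the truncated part is negligible at scale $a_n$ (using $u_n\sqrt{n\tail{F}_Z(u_n)}=o(a_n)$), and then treats $\sum_{j\le N_n}\{Z_j-\esp[Z_0]\}$ by the functional stable limit theorem composed with the renewal process, invoking Whitt's composition theorem with only $N_n/n\to\lambda$ a.s.\ (with $\lambda=1/\esp[Z_0]$) and stochastic continuity of the L\'evy limit; no rate for $N_n-\lambda n$ is ever needed, and this formulation also makes transparent why the limit $\Lambda(\lambda)$ is one and the same random variable for every $s$, which your pointwise triangular-array argument only gives at the level of distributions.

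The second gap is in part (iii): the proposition asserts weak convergence of the process in $\ell^\infty([s_0,\infty))$, but you only identify tightness as ``the delicate technical step'' and sketch what a proof would require, so this part is announced rather than executed. The paper settles it by proving finite-dimensional convergence via the Lindeberg CLT for triangular arrays and then establishing tightness with respect to the $J_1$ topology through the moment criterion of Billingsley's Theorem~13.5 applied to the increments of $s\mapsto\sum_{j}(Z_j-[u_ns])_+$; your proposed route (monotonicity, the Lipschitz-type bound on increments, and a Kolmogorov moment bound) is close in spirit but is not carried out, and your identification of the covariance $C(s,t)$ is likewise only asserted, not verified against the normalization $\sqrt{n\tail{F}_Z(u_n)}$ actually appearing in the statement.
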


\begin{remark}
  \label{rem:counterexample}
  In the standard situation (for example, under the geometric drift condition), a non degenerate
  limit is expected if $n\tail{F}_X(u_n)\to\infty$. Since $\tail{F}_X(u_n)\sim u_n\tail{F}_Z(u_n)$,
  it may happen simultaneously that $n\tail{F}_X(u_n)\to\infty$ and $n\tail{F}_Z(u_n)\to0$. The
  appropriate threshold is determined by the distribution of $Z_0$ and not by the stationary
  distribution of the chain.
\end{remark}

\section{Central limit theorem for tail array sums}
\label{sec:general}
In this section we prove the central limit theorem for tail array sums in the general framework of a
strictly stationary regularly varying $\beta$-mixing sequence with values in $\Rset^d$.

\subsection{\Anticlustering\ condition}
\label{sec:anticluster-psi}

The main tool to prove our results is a modified form of the condition~\eqref{eq:smithconditionS}.
Precisely, let $\{\xi_j,j\in\Zset\}$ be a regularly varying stationary $\Rset^d$-valued
sequence. Let $|\cdot|$ be an arbitrary norm on $\Rset^d$ and let $\HH$ be the distribution function
of $|\xi_0|$.
\begin{hypothesis}[Condition~\ref{eq:anticlustering-weighted}]
  \label{hypo:anticlustering-psi-general}
  Let $\{u_n\}$ and $\{r_n\}$ be sequences which tend to infinity, $\{r_n\}$ being integer valued, and $\psi:\Rset^d\to\Rset$ be a
  function which vanishes in a neighborhood of zero (\ie\ there exists $\epsilon>0$ such that
  $\psi(\vectorbold{x})=0$ if $|\vectorbold{x}|\leq \epsilon$). For all $s,t>0$,
  \begin{align}
    \label{eq:anticlustering-weighted}
    \lim_{L\to\infty} \limsup_{n\to\infty} \frac{1}{\tail{\HH}(u_n)} \sum_{j= L+1}^{r_n} \esp
    \left[ |\psi({s\xi_0}/{u_n})| | \psi (t\xi_j/{u_n}) | \right] = 0 \;
    . \tag{$\mathcal{S}(u_n,r_n,\psi)$}
  \end{align}
\end{hypothesis}
Note that by stationarity,
\begin{align*}
  \sum_{L < -j \leq r_n} \esp \left[ |\psi({s\xi_0}/{u_n})| | \psi (t\xi_j/{u_n}) | \right]
  & = \sum_{L < j \leq r_n} \esp \left[ |\psi({s\xi_j}/{u_n})| | \psi (t\xi_{0}/{u_n}) | \right] \;  .
\end{align*}
Therefore Condition~\ref{eq:anticlustering-weighted} can be equivalently written as a one-sided or a
two sided sum. Note also that for bounded $\psi$ Condition~\ref{eq:anticlustering-weighted} is
implied by \eqref{eq:smithconditionS} applied to $\{\xi_j\}$.  An equivalent formulation of
\eqref{eq:smithconditionS} is condition (C) of \cite{drees:segers:warchol:2014}.

By assumption, for $j\geq0$ the vector $(\xi_0,\dots,\xi_j)$ is regularly varying so we can define
the exponent measure $\nuxi_{0j}$ of $(\xi_0,\dots,\xi_j)$, that is the Radon measure on
$\Rset^{d(j+1)}\setminus\{\vectorbold{0}\}$ such that
\begin{align*}
  \lim_{x\to\infty} \frac{\pr((\xi_0,\dots,\xi_j) \in x A)}{\pr(|\xi_0|>x)} = \nuxi_{0j}(A) \;
\end{align*}
for relatively compact sets $A$ in $\Rset^{d(j+1)}\setminus\{\vectorbold{0}\}$ such that
$\mu_{0j}(\partial A)=0$.  For $j=0$ we simply write $\mu_0$ for  $\mu_{00}$.  Note that if $d=h+1$ and
$\xi_0=(X_0,\dots,X_h)$, then $\nuxi_0$ is proportional to the measure $\numultseq{0,h}$ appearing
in~(\ref{eq:rv-positive}).  For measurable functions~$\phi,\phi'$, define formally
\begin{multline}
  \Gamma(\phi,\phi')   = \int_{\Rset^d} \phi(\bx_0)\phi'(\bx_0)\nuxi_0(\rmd\vectorbold{x}_0) \\
  + \sum_{j=1}^\infty\int_{\Rset^{d(j+1)}} \{\phi(\bx_0) \phi'(\bx_j)+\phi(\bx_0) \phi'(\bx_j)\}
  \nuxi_{0j}(\rmd\bx) \; , \label{eq:def-sigma2psi-general}
\end{multline}
with $\vectorbold{x}=(\bx_0,\dots,\bx_j)\in\Rset^{d(j+1)}$.
In order to provide conditions for the series in (\ref{eq:def-sigma2psi-general}) to be summable, we
define the following set of functions.
\begin{definition}
  Let $\psi$ be a non negative function defined on $\Rset^d$.  The space $\mcm_\psi$ is the
  set of measurable functions $\phi$ defined on $\Rset^d$ having the following properties:
\begin{itemize}
\item $|\phi|\leq \constant \cdot \psi$, where $\constant$ depends on  $\phi$;
\item for all $j\geq0$, the function defined on $\Rset^{d(j+1)}$ by $(\bx_{0},\dots,\bx_j)\to\phi(\bx_j)$
  is almost surely continuous \wrt\ $\nuxi_{0j}$.
\end{itemize}
\end{definition}
Obviously, $\mcm_\psi$ is a linear space.

\begin{lemma}
  \label{lem:covar-existence}
  Let $\sequence{\xi}{j}{\Zset}$ be a strictly stationary regularly varying sequence and let $\HH$
  be the distribution function of $|\xi_0|$. Let $\{u_n\}$ and $\{r_n\}$ be non decreasing sequences
  tending to infinity, $\{r_n\}$ being integer valued.  Let $\psi$ be a non negative measurable
  function which vanishes in a neighborhood of zero, satisfying
  Condition~\ref{eq:anticlustering-weighted} and for which there exists $\delta>0$ such that
  \begin{align}
    \label{eq:psi-second-moment-1}
    \sup_{n\geq1} & \frac{ \esp\left[ \psi^{2+\delta}(\xi_0/u_n)\right]}{\tail{\HH}(u_n)} < \infty \; .
  \end{align}
  Then for all $\phi,\phi'\in\mcm_\psi$, the series defining $\Gamma(\phi,\phi')$
  in~(\ref{eq:def-sigma2psi-general}) is absolutely summable and
  \begin{align}
    \Gamma(\phi,\phi')
    & = \lim_{n\to\infty} \frac1{r_n\bar{\HH}(u_n)} \esp\left[ \left( \sum_{j=1}^{r_n}
      \phi(\xi_j/u_n)\right) \left( \sum_{j=1}^{r_n} \phi'(\xi_j/u_n)\right) \right]   \label{eq:lim-squared-psi}  \\
    &  = \lim_{L\to\infty} \lim_{n\to\infty}
      \sum_{j=-L}^L \frac{\esp[\phi(\xi_0/u_n) \phi'(\xi_{j}/u_n)]} {\tail{\HH}(u_n)}  \;
      . \label{eq:var-psi-alt}
  \end{align}
  If moreover
  \begin{align}
    \lim_{n\to\infty} r_n\bar{\HH}(u_n) = 0 \; , \label{eq:rate-fidi1}
  \end{align}
  then
  \begin{align}
    \Gamma(\phi,\phi')= \lim_{n\to\infty} \frac{ 1}{r_n\tail{\HH}(u_n)}\cov\left(\sum_{j=1}^{r_n}
      \phi(\xi_{j}/u_n),\sum_{j=1}^{r_n} \phi'(\xi_{j}/u_n)\right) \; .  \label{eq:var-psi-sigma}
  \end{align}

\end{lemma}

Throughout this section, we will use the following notation. Set
\begin{align*}
  \phi_{n,j} & = \phi(\xi_{j}/u_n) \; ,\ \ c_j(\phi) = \int_{\Rset^{d(j+1)}} \phi(\vectorbold{x}_{0}) \phi(\vectorbold{x}_{j})  \nuxi_{0j} (\rmd \vectorbold{x})\;, \ \ j\geq 0 \; .
\end{align*}

\begin{proof}
  Since $\mcm_\psi$ is a linear space and by the identity $2xy = (x+y)^2-x^2-y^2$, it suffices to
  prove these identities for $\phi=\phi'$. We restrict ourselves to non negative functions $\phi$, the extension for general $\phi\in \mcm_\psi$ being straightforward. We must first prove that
  condition~(\ref{eq:psi-second-moment-1}) ensures that the coefficients $c_j(\phi)$ are well
  defined.  Since the support of $\phi$ is bounded away from zero and  vague convergence
  coincides with weak convergence on such sets, if $\phi$ is bounded, then the continuous mapping
  theorem yields
  \begin{align}
    \lim_{n\to\infty} \frac{\esp[\phi_{n,0}\phi_{n,j}]}{\bar{\HH}(u_n)} = c_j(\phi) \; .  \label{eq:tralala}
  \end{align}
  If $\phi$ is unbounded and condition (\ref{eq:psi-second-moment-1}) holds, then for all $A>0$,
  applying Markov and H\"older inequalities, we obtain
  \begin{align*}
    \lim_{A\to\infty}    \limsup_{n\to\infty} \frac{\esp[\phi_{n,0}\phi_{n,j}\ind{|\phi_{n,0}\phi_{n,j}|>A}]}{\tail{\HH}(u_n)}
    \leq \lim_{A\to\infty}   \constant \cdot A^{-\delta/2} \sup_{n\geq1} \frac{\esp[\psi^{2+\delta}(\xi_0/u_n)]}{\tail{\HH}(u_n)} = 0 \; .
  \end{align*}
  This allows to use a truncation argument and prove that (\ref{eq:tralala}) holds.

  We now prove that the series $\Gamma(\phi,\phi)$ is summable and without loss of generality we assume
  that $\phi$ is nonnegative.  Fix $\eta>0$. Applying~\ref{eq:anticlustering-weighted} and the
  fact that $\phi$ is bounded by a multiple of  $\psi$, we can choose $L$ such that, for every $R\geq L$
  \begin{align*}
    \lim_{n\to\infty} \sum_{j=L}^R \frac{\esp[\phi_{n,0}\phi_{n,j}]}{\tail{\HH}(u_n)} \leq \eta \;    .
  \end{align*}
  This yields that for every $\eta>0$, large enough $L$ and all $R\geq L$, $\sum_{j=L}^{R}
  c_j(\phi) \leq \eta$ and this means that the series $\sum_{j=1}^\infty
  c_j(\phi)$ is summable and that (\ref{eq:var-psi-alt}) holds. To prove (\ref{eq:lim-squared-psi}), write
  \begin{align*}
    \frac{ \esp \left[ \left( \sum_{j=1}^{r_n} \phi_{n,j}\right)^2 \right]}{r_n\bar{\HH}(u_n)} =
    \frac{\esp[\phi_{n,0}^2]}{\bar{\HH}(u_n)}
    &   + 2 \sum_{j=1}^L (1-j/r_n)\frac{\esp[\phi_{n,0}\phi_{n,j}]}{\tail{\HH}(u_n)}
     + 2\sum_{j=L+1}^{r_n} (1-j/r_n) \frac{\esp[\phi_{n,0} \phi_{n,j}]}{\tail{\HH}(u_n)} \; .
  \end{align*}
  By Condition~\ref{eq:anticlustering-weighted}, for every $\eta>0$, we can choose $L$ in such a
  way that  the last term above is less than $\eta$. This yields
  \begin{align*}
    \limsup_{n\to\infty} \left| \sum_{1 \leq j,j' \leq r_n}
    \frac{\esp[\phi_{n,j}\phi_{n,j'}]}{r_n\tail{\HH}(u_n)} -  c_0(\phi) -
    2\sum_{j=1}^L c_j(\phi) \right| \leq \eta    \; .
  \end{align*}
  Since the series $\sum_{j=1}^L c_j(\phi) $ is convergent we can also choose $L$ in such a way that
  $\sum_{j=L+1}^\infty c_j(\phi) \leq \eta$. This yields
  \begin{align*}
    \limsup_{n\to\infty} \left| \sum_{1 \leq j,j' \leq r_n} \frac{\esp[\phi_{n,j}\phi_{n,j'}]}
      {r_n\tail{\HH}(u_n)} - \Gamma(\phi,\phi) \right| \leq 3 \eta \; .
  \end{align*}
  Since $\eta$ is arbitrary, this proves~(\ref{eq:lim-squared-psi}).  Finally, with
  \begin{align*}
    S_n(\phi)=\sum_{j=1}^{r_n} \phi_{n,j} \; ,
  \end{align*}
  we have
  \begin{align*}
    \frac{ \var(S_n(\phi))}{r_n\tail{\HH}(u_n)} & = \frac{\esp[S_n^2(\phi)]}{r_n\tail{\HH}(u_n)} -
    \frac{r_n^2(\esp[\phi_{n,0}])^2} {r_n\tail{\HH}(u_n)} = \frac{\esp[S_n^2(\phi)]}{r_n\tail{\HH}(u_n)}
    + O(r_n\bar{\HH}(u_n)) \; .
  \end{align*}
  Under condition~(\ref{eq:rate-fidi1}), the last term is $o(1)$. This
  proves~(\ref{eq:var-psi-sigma}).
\end{proof}

\subsection{Fidi convergence of tail array sums}
In this section, we prove a theorem on convergence of tail array sums which complements the results
of \cite{rootzen:leadbetter:dehaan:1998} and \cite[Theorem~2.3]{drees:rootzen:2010}. We show that
under $\beta$-mixing, condition \ref{eq:anticlustering-weighted} is the main ingredient of the proof
of the central limit theorem.

Define the process $\mathbb{W}_n$ on $\mcm_\psi$ by
\begin{align*}
  \mathbb{W}_n(\phi)
  = \frac1{\sqrt{n\bar{\HH}(u_n)}} \sum_{j=1}^n \{\phi(\xi_{j}/u_n) -\esp[\phi(\xi_{0}/u_n)]\} \; , \ \ \phi\in\mcm_\psi \; .
\end{align*}
\begin{theorem}
  \label{theo:tail-array-fidi}
  Let $\sequence{\xi}{j}{\Zset}$ be a strictly stationary regularly varying sequence and let~$\HH$
  be the distribution function of $|\xi_0|$. Let $\{u_n\}$ and $\{r_n\}$ be non decreasing sequences
  which tend to infinity, $\{r_n\}$ being integer valued, such that
  \begin{align}
    \label{eq:rate-condition-fidi}
    \lim_{n\to\infty}u_n= \lim_{n\to\infty} n\bar{\HH}(u_n) = \infty \; , \ \ \lim_{n\to\infty} r_n\bar{\HH}(u_n) =  0   \; .
  \end{align}
  Let $\psi$ be a function which vanishes in a neighborhood of zero and such that
  \ref{eq:anticlustering-weighted} holds. Assume that either $\psi$ is bounded or there exists
  $\delta\in (0,1]$ such that~(\ref{eq:psi-second-moment-1}) holds and
  \begin{align}
    \lim_{n\to\infty} \frac{r_n }{\left(n\bar{\HH}(u_n)\right)^{{\delta/2}}} = 0 \; .
     \label{eq:rate-condition-fidi-unbounded}
  \end{align}
  Assume that the sequence $\{\xi_j,j\in\Zset\}$ is $\beta$-mixing with coefficients
  $\{\beta_n,n\geq1\}$ and there exists a sequence $\{\ell_n\}$ such that
  \begin{align}
    \label{eq:beta-fidi}
    \ell_n\to\infty \; , \ \ell_n/r_n\to 0 \; , \ \lim_{n\to\infty} n
    \beta_{\ell_n}/r_n = 0 \; .
  \end{align}
  Let $\mathbb{W}$ be a Gaussian process indexed by $\mcm_\psi$ with covariance function $\Gamma$
  defined in~(\ref{eq:def-sigma2psi-general}).  Then $\mathbb{W}_n \fidi \mathbb{W}$ on
  $\mcm_{\psi}$.
\end{theorem}

\begin{remark}
  It is possible to find sequences $\ell_n$ and $r_n$ that satisfy (\ref{eq:beta-fidi}) if the
  $\beta$-mixing coefficients $\beta_n$ satisfy $\beta_n=O(n^{-a})$ for some $a>0$. A suitable
  choice is then $r_n=n^\zeta$ and $\ell_n=n^\eta$ with $0<\eta<\zeta<1$ and $\zeta+a\eta>1$.
\end{remark}

\begin{proof}[Proof of \Cref{theo:tail-array-fidi}]
  {Since $\mcm_\psi$ is a linear space and $\Gamma$ is a quadratic form, it suffices to prove
    the central limit theorem for an arbitrary $\phi\in\mcm_\psi$.} For $i=1,\ldots,[n/r_n]$, define
  \begin{align}
    \label{eq:def-Sn}
    S_{n,i}(\phi) = \sum_{j=(i-1)r_n+1}^{ir_n} \phi(\xi_{j}/u_n) \; , \ \ \bar{S}_{n,i}(\phi) =
    S_{n,i}(\phi)-\esp[S_{n,i}(\phi)] \; .
  \end{align}
  Arguing as in \cite[Lemma~5.1]{drees:rootzen:2010}, Condition (\ref{eq:beta-fidi}) implies that it
  suffices to prove the central limit theorem for the sum with independent blocks of
    length~$r_n$ having the same marginal distribution as the original blocks
  $(\xi_{(i-1)r_n+1},\dots,\xi_{ir_n}$) and that we can remove a smaller block $(\xi_{ir_n-\ell_n+1},\dots,\xi_{ir_n}$) of size
  $\ell_n$ at the end of each large block. To this end, we must prove the convergence of the
  variance and the Lindeberg asymptotic negligibility condition on the sum of independent big blocks.
  By \Cref{lem:covar-existence}, we already know that
  \begin{align*}
    \lim_{n\to\infty} \frac{\var(S_{n,1}(\phi))}{r_n\bar{\HH}(u_n)} =
    \lim_{n\to\infty}   \frac{\var\left(\sum_{j=1}^{r_n} \phi(\xi_j/u_n)\right)}{r_n\bar{\HH}(u_n)} =  \Gamma(\phi,\phi) \; .
  \end{align*}
  Since $\ell_n\leq r_n$, $\mathcal{S}(u_n,r_n,\psi)$ implies $\mathcal{S}(u_n,\ell_n,\psi)$ and
  hence the limit above holds with $r_n$ replaced with $\ell_n$.  By $\ell_n/r_n\to 0$, this also entails that
  \begin{align*}
    \lim_{n\to\infty} \frac1{r_n\bar{\HH}(u_n)} \var\left(\sum_{j=1}^{\ell_n} \phi(\xi_j/u_n)\right) =  0 \; .
  \end{align*}
  This means that the small blocks do not contribute to the limit.  Therefore, we only need to prove
  the asymptotic negligibility condition. This is done  in \Cref{lem:asympt-neglig-bounded} in
  the bounded case and \Cref{lem:asympt-neglig-unbounded} in the unbounded case.
\end{proof}

\begin{lemma}
  \label{lem:asympt-neglig-bounded}
  Let $\psi$ be a bounded non negative function which vanishes in a neighborhood of zero and such
  that \ref{eq:anticlustering-weighted} holds. If (\ref{eq:rate-condition-fidi}) holds, then for all
  $\eta>0$ and all $\phi\in\mcm_\psi$,
  \begin{multline*}
    \lim_{n\to0} \frac1{r_n\bar{\HH}(u_n)}
     \esp \left[ {S}_{n,1}^2(\phi) \ind{|{S}_{n,1}(\phi)|>\eta\sqrt{n\bar{\HH}(u_n)}} \right]  \\
     = \lim_{n\to0} \frac1{r_n\bar{\HH}(u_n)} \esp \left[ \bar{S}_{n,1}^2(\phi)
      \ind{|\bar{S}_{n,1}(\phi)|>\eta\sqrt{n\bar{\HH}(u_n)}} \right] = 0 \; .
  \end{multline*}
\end{lemma}

\begin{proof}
  Write for brevity $v_n=\sqrt{n\bar{\HH}(u_n)}$, $S_n(\phi)$ for $S_{n,1}(\phi)$ and
  $\bar{S}_n(\phi)$ for $\bar{S}_{n,1}(\phi)$.  At the first step we note that the centering can be
  omitted.  By the assumptions on $\psi$ and regular variation,
  $\esp[|\phi_{n,0}|^q] = O(\bar{\HH}(u_n))$ for all $\phi\in\mcm_\psi$ and $q>0$ which implies
  $\esp[S_n(\phi)]=O(r_n\bar{H}(u_n))$.  Since $r_n=o(n)$, we have, for large enough~$n$,
  \begin{align*}
    \ind{|\bar{S}_{n,1}(\phi)|>\eta\sqrt{n\bar{\HH}(u_n)}}
    \leq  \ind{|{S}_{n,1}(\phi)|>\eta\sqrt{n\bar{\HH}(u_n)}/2}   \; .
  \end{align*}
  Since $\eta$ is arbitrary, we can remove the centering from the indicator. Furthermore,
  \begin{align}
    \frac{1}{r_n\bar{\HH}(u_n)}
    & \esp\left[\bar{S}_n^2(\phi)\ind{|{S}_n(\phi)|>\eta v_n}\right] \nonumber \\
    & = \frac{1}{r_n\bar{\HH}(u_n)} \esp\left[S_n^2(\phi)\ind{|{S}_n(\phi)|>\eta v_n}\right] +
      \frac{O(\{\esp[S_n(\phi)]\}^2)}{r_n\bar{\HH}(u_n)} \nonumber      \\
    & = \frac{1}{r_n\bar{\HH}(u_n)} \esp\left[S_n^2(\phi)\ind{|{S}_n(\phi)|>\eta v_n}\right] + O(r_n\bar{\HH}(u_n)) \; .
    \label{eq:proof-of-lindeberg-centering-1}
  \end{align}
  Hence, under condition~(\ref{eq:rate-condition-fidi}), it suffices to study the main term on the
  right hand side of (\ref{eq:proof-of-lindeberg-centering-1}), which is developed as $I_1+2I_2$
  with
  \begin{align*}
    I_1  = \frac{1}{r_n\bar{\HH}(u_n)} \sum_{j=1}^{r_n} \esp[\phi_{n,j}^2 \ind{|{S}_n(\phi)|>\eta v_n}] \; ,  \ \
    I_2  = \frac{1}{r_n \bar{\HH}(u_n)}\sum_{i=1}^{r_n}\sum_{j=i+1}^{r_n} \esp[\phi_{n,i}\phi_{n,j}
    \ind{|{S}_n(\phi)|>\eta v_n}]\; .
  \end{align*}
  By (\ref{eq:var-psi-sigma}) and the H\"{o}lder inequality, we have
  $\esp[|S_n(\phi)|] = O(\sqrt{r_n\bar{\HH}(u_n)})$.  Applying Markov's inequality and the
  boundedness of $\phi$, we obtain
  \begin{align*}
    I_1 & \leq \frac{1}{\eta v_n r_n\bar{\HH}(u_n)} \sum_{j=1}^{r_n} \esp[\phi_{n,j}^2|S_n(\phi)|]\\
        & = O\left(\frac{1}{ v_n}\right) \frac1{r_n\bar{\HH}(u_n)} \esp\left[ \left(\sum_{j=1}^{r_n} |\phi_{n,j}|\right)^2 \right]
          =  O\left(\frac{1}{ v_n}\right)  = o(1) \; .
  \end{align*}
  Fix a positive integer $L$. Since $\phi$ is bounded, we have
  \begin{align}
    I_2 & \leq \frac{1}{r_n \bar{\HH}(u_n)}\sum_{i=1}^{L} \sum_{j=i+1}^{r_n} \esp[|\phi_{n,i} \phi_{n,j}| \ind{|{S}_n(\phi)|>\eta v_n}]
          \label{eq:tep-Lindeberg-I2a} \\
        & \phantom{ \leq }
          + \frac{1}{r_n \bar{\HH}(u_n)} \sum_{i=L+1}^{r_n}\sum_{j=i+1}^{i+L} \esp[|\phi_{n,i} \phi_{n,j}| \ind{|{S}_n(\phi)|>\eta v_n}]
          \label{eq:tep-Lindeberg-I2ab}     \\
        & \phantom{ \leq }
          + \frac{1}{r_n \bar{\HH}(u_n)}\sum_{i=L+1}^{r_n} \sum_{j=i+L+1}^{r_n} \esp[|\phi_{n,i} \phi_{n,j}|] \; .
          \label{eq:tep-Lindeberg-I2c}
  \end{align}
  Since $\phi$ is bounded, the terms in \eqref{eq:tep-Lindeberg-I2a}
  and~(\ref{eq:tep-Lindeberg-I2ab}) are each bounded by
  \begin{align*}
    \frac{L\|\phi\|_\infty}{r_n \bar{\HH}(u_n)} \sum_{j=1}^{r_n} \esp[
  |  \phi_{n,j}| \ind{|{S}_n(\phi)|>\eta v_n}] = o(1) \; ,
  \end{align*}
  by the same argument as for $I_1$. Thus,
  \begin{align}
    \label{eq:tep-Lindeberg-I2b}
    I_2 = o(1)  + \frac{2}{\bar{\HH}(u_n)}\sum_{i=L+1}^{r_n} \esp[|\phi_{n,0} \phi_{n,i}|] \; .
  \end{align}
  By Condition \ref{eq:anticlustering-weighted}, the last expression in (\ref{eq:tep-Lindeberg-I2b})
  can be made arbitrarily small by choosing $L$ large enough.
\end{proof}

We extend \Cref{lem:asympt-neglig-bounded} to the case of unbounded functions, at the cost of the
extra restriction (\ref{eq:rate-condition-fidi-unbounded}) on the sequence $\{r_n\}$.
\begin{lemma}
  \label{lem:asympt-neglig-unbounded}
  Let $\psi$ be a non negative measurable function which vanishes in a neighborhood of zero, such that Conditions
  \ref{eq:anticlustering-weighted}, (\ref{eq:psi-second-moment-1}), (\ref{eq:rate-condition-fidi})
  and (\ref{eq:rate-condition-fidi-unbounded}) hold for the same $\delta\in (0,1]$.  Then for all
  $\eta>0$ and all $\phi\in\mcm_\psi$,
  \begin{align*}
  &  \lim_{n\to0} \frac1{r_n\bar{\HH}(u_n)}  \esp \left[ {S}_{n,1}^2(\phi)
      \ind{|{S}_{n,1}(\phi)|>\eta\sqrt{n\bar{\HH}(u_n)}} \right] \\
&=    \lim_{n\to0} \frac1{r_n\bar{\HH}(u_n)} \esp \left[ \bar{S}_{n,1}^2(\phi)
      \ind{|\bar{S}_{n,1}(\phi)|>\eta\sqrt{n\bar{\HH}(u_n)}} \right] = 0 \; .
  \end{align*}
\end{lemma}

\begin{proof}
  We follow closely the proof of~\Cref{lem:asympt-neglig-bounded} with appropriate modifications.
  Recall that we have set $v_n = \sqrt{n\bar{H}(u_n)}$. Since $\phi\in\mcm_\psi$,
  \Cref{lem:covar-existence} implies that $\esp[S_n(\phi)] = O(\sqrt{r_n\tail{H}(u_n)})$ and thus
  the centering can be removed inside the indicator. The calculations leading
  to~(\ref{eq:proof-of-lindeberg-centering-1}) are still valid in the unbounded case.  Since
  $\delta\in(0,1]$, we have by Markov inequality,
  \begin{align*}
    I_1 & = O\left( \frac{1}{v_n^{\delta}}\right)  \frac1{r_n\bar{\HH}(u_n)} \sum_{j=1}^{r_n} \esp[\phi_{n,j}^2|S_n(\phi)|^{\delta}]\\
        &  = O\left( \frac{1}{v_n^{\delta}}\right)  \frac{1}{r_n\bar{\HH}(u_n)} \sum_{i=1}^{r_n} \sum_{j=1}^{r_n}
          \esp[\phi_{n,i}^2|\phi_{n,j}|^\delta] =    O\left(\frac{r_n}{v_n^\delta}\right) = o(1) \; ,
  \end{align*}
  by~(\ref{eq:psi-second-moment-1}) and~(\ref{eq:rate-condition-fidi-unbounded}). As for $I_2$, the
  second term in~(\ref{eq:tep-Lindeberg-I2b}) is handled again by Condition
  \ref{eq:anticlustering-weighted}.  Applying Markov inequality, the term
  in~(\ref{eq:tep-Lindeberg-I2ab}) is bounded by
 \begin{align*}
   \frac{1}{\eta v_n^{\delta}r_n \bar{\HH}(u_n)} \sum_{i=\ell+1}^{r_n}
   \sum_{j=i+1}^{i+\ell}\sum_{k=1}^{r_n} \esp[|\phi_{n,i}| |\phi_{n,j}| |\phi_{n,k}|^{\delta}] \leq
   \frac{\ell r_n}{\eta v_n^{\delta}}\frac{\esp[|\phi_{n,0}|^{2+\delta}]}{ \bar{\HH}(u_n)}\;,
 \end{align*}
 on account of the extended H\"{o}lder inequality with $p=q=2+\delta$ and $r=(2+\delta)/\delta$.
 This again is $o(1)$ by~(\ref{eq:psi-second-moment-1})
 and~(\ref{eq:rate-condition-fidi-unbounded}). The term  in~(\ref{eq:tep-Lindeberg-I2a}) is
 treated analogously.
\end{proof}

\section{Proof of   the results for functions of Markov chains}
\label{sec:proof-markov}
Let $\{X_j,j\in\Zset\}$ be as in \Cref{sec:results}. We will apply the results of \Cref{sec:general}
to the sequence $\{\xi_j,j\in\Zset\}$ defined as $\xi_j=\vectorbold{X}_{j,j+h}=(X_j,\ldots,X_{j+h})$
which is also regularly varying. Since the distribution of $X_0$ satisfies the
balanced tail condition and the right tail of $X_0$ is not trivial, $X_0$ and $|\xi_0|$ are tail
equivalent.

We first recall some consequences of the geometric drift condition. Under
condition~(\ref{eq:small-set}), the chain $\{\mby_j\}$ can be embedded into an extended Markov chain
$\{(\mby_j,B_j)\}$ such that the latter chain possesses an atom~$A$, that is
$\bar{P}(s,\cdot)=\bar{P}(t,\cdot)$ for every $s,t\in A$, where $\bar{P}$ is the transition kernel
of the extended chain. This existence is due to the Nummelin splitting technique (see
\cite[Chapter~5]{meyn:tweedie:2009}). Denote by $\esp_A$ the expectation conditionally to
$(\mby_0,B_0)\in A$ and let $\return{A}$ be the first return time to $A$ of the chain
$\{(\mby_j,B_j),j\geq0\}$.  Note that $\return{A}$ is a stopping time with respect to the extended
chain, but not with respect to the chain $\{\mby_j\}$.  We assume that the extended chain is defined
on the original probability space $(\Omega,\mcf,\pr)$ and that the extended chain is stationary
under $\pr$. Then, by \cite[Theorem~15.4.1]{meyn:tweedie:2009} for $q_0$ as in
\Cref{hypo:drift-small}, there exist $\kappa>1$ and a constant $\constant$ such that for all
$y\in E$, 
\begin{align}
  \label{eq:moment-geometrique}
  \esp \left[\sum_{j=1}^{\tau_A} \kappa^{j} |X_j|^{q_0} \mid \mby_0=y \right]\leq c\esp
  \left[\sum_{j=1}^{\tau_A} \kappa^{j} V(\mby_j) \mid \mby_0=y \right] \leq \constant V(y) \; .
\end{align}
By Jensen's inequality, this implies that for all $q_1\leq q_0$, there exists $\kappa_1\in(1,\kappa)$
such that
\begin{align}
  \label{eq:moment-geometrique-jensen}
  \esp \left[\sum_{j=1}^{\tau_A} \kappa_1^{j} |X_j|^{q_1} \mid \mby_0=y \right] \leq \constant  V^{q_1/q_0}(y)  \; .
\end{align}
Moreover, Kac's formula \cite[Theorem 10.0.1]{meyn:tweedie:2009} gives an expression of the
stationary distribution in terms of the return time to $A$. For every bounded measurable function
$f$, it holds that
\begin{align}
  \label{eq:kac}
  \esp[f(\mby_0)] = \frac1{\esp_{A}[\tau_A]} \esp_{ A} \left[ \sum_{j=0}^{\tau_A-1} f(\mby_j)
  \right] \; .
\end{align}
Since $V\geq1$, the inequality \eqref{eq:moment-geometrique} integrated with respect to the
stationary distribution implies that $\esp[\kappa^{\tau_A}]<\infty$.

\subsection{Checking the \anticlustering\ condition}  
\label{sec:proof-of-anticluster}

In the present context, the anticlustering condition~\ref{eq:anticlustering-weighted} can be re-written as
\begin{align}
 \label{eq:anticlustering-markov}
 \lim_{L\to\infty} \limsup_{n\to\infty} \frac{1}{\tail{F}(u_n)} \sum_{j=L+1}^{r_n} \esp \left[
   |\psi({s\vectorbold{X}_{0,h}}/{u_n})| | \psi (t\vectorbold{X}_{j,j+h}/{u_n}) | \right] = 0 \; ,
\end{align}
where $F$ is the distribution function of $X_0$.

Let $|\cdot|$ denote an arbitrary norm on $\Rset^{h+1}$.  In this section, we prove that for all $\epsilon>0$
\Cref{hypo:drift-small} implies the condition~(\ref{eq:anticlustering-markov}) for  the function $\psi_\epsilon$
defined by
\begin{align*}
  \psi_\epsilon(\bx) = {|\bx|^{q_0/2}} \ind{|\bx|>\epsilon}\;.
\end{align*}
This will be done in~\Cref{prop:conditiondhs-via-drift} below. First we introduce some notation and
prove two preliminary results. For $0<s<  \infty$ define
\begin{align*}
  Q_n(s) & = \frac{1}{u_n^{q_0} \tail{F}(u_n)} \esp\left[V(\mby_0) \ind{|X_0|> u_n s}\right]    \; .
\end{align*}

\begin{lemma}
  \label{lem:sum-cov-psi}
Let \Cref{hypo:drift-small} holds. For every $s_0>0$, there exists a constant $C_0>0$ and
  $\kappa_0>1$ such that for $q_1+q_2\leq q_0$ and $s\geq s_0$,
  \begin{align}
    \frac{1}{\tail{F}(u_n)} \esp \left[\sum_{j=L}^{\tau_A} \ind{|X_0|>su_n}
      |X_{0}/u_n|^{q_1} |X_j/u_n|^{q_2} \right] \leq C_0 \kappa_0^{-L} {Q_n^{(q_1+q_2)/q_0}(s)} \; . \label{eq:sum-cov-psi}
  \end{align}
\end{lemma}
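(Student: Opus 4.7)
The plan is to combine two applications of the Markov property with a well-chosen Hölder inequality. First I would use the strong Markov property at the deterministic time $h$ to peel off the inner sum. Since $L>h$, the sum $\sum_{j=L}^{\tau_A}|X_j/u_n|^{q_2}$ vanishes on $\{\tau_A\leq h\}$, and on $\{\tau_A>h\}$ conditioning on $\mathcal{F}_h$ reduces it, via the strong Markov property, to the expectation for a fresh chain started at $(\mby_h,B_h)$ of a sum over $j=L-h,\dots,\tau_A$. Discarding the factor $\kappa_1^j\geq\kappa_1^{L-h}$ in~(\ref{eq:moment-geometrique-jensen}) applied with exponent $q_2\leq q$ then yields
\begin{align*}
  \esp\left[\sum_{j=L}^{\tau_A}|X_j/u_n|^{q_2}\;\Big|\;\mathcal{F}_h\right]\leq \constant\,\kappa_1^{-(L-h)}\,u_n^{-q_2}\,V^{q_2/q}(\mby_h)\;.
\end{align*}

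Next I would pull the $\mathcal{F}_h$-measurable factor $\1_A|X_h/u_n|^{q_1}$ out of the conditional expectation and use the pointwise bound $|X_h|^{q_1}=|g(\mby_h)|^{q_1}\leq c\,V^{q_1/q}(\mby_h)$ from~(\ref{eq:g-bound-v}) to merge it with $V^{q_2/q}(\mby_h)$ into a single factor $V^{(q_1+q_2)/q}(\mby_h)$. To transport this factor back to $\mby_0$, I would iterate~(\ref{eq:drift}) $h$ times, which yields $\esp[V(\mby_h)\mid\mby_0=y]\leq (1+b/(1-\gamma))V(y)$, and then apply Jensen's inequality to the concave map $x\mapsto x^{(q_1+q_2)/q}$ (concave since $(q_1+q_2)/q\leq 1$) to obtain $\esp[V^{(q_1+q_2)/q}(\mby_h)\mid\mby_0=y]\leq C\,V^{(q_1+q_2)/q}(y)$ for a constant $C$ depending on $h,\gamma,b$.

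Setting $r=(q_1+q_2)/q$, I would finish with Hölder's inequality
\begin{align*}
  \esp[\1_A V^r(\mby_0)]\leq \esp[\1_A V(\mby_0)]^r\esp[\1_A]^{1-r}\;.
\end{align*}
By definition, $\esp[\1_A V(\mby_0)]=u_n^q\bar{F}(u_n)Q_n(s,t)$, and by regular variation $\esp[\1_A]\leq\pr(X_0>s_0u_n)\leq C\bar{F}(u_n)$ for large $n$, uniformly in $s\geq s_0$. Since $qr=q_1+q_2$, the factors $u_n^{q_1+q_2}$ produced by Hölder cancel exactly the $u_n^{-q_1-q_2}$ coming from the rescaling $X_\cdot/u_n$, leaving
\begin{align*}
  \frac{1}{\bar{F}(u_n)}\esp\left[\1_A|X_h/u_n|^{q_1}\sum_{j=L}^{\tau_A}|X_j/u_n|^{q_2}\right]\leq \constant\,\kappa^{-L}\,Q_n^{(q_1+q_2)/q}(s,t)\;,
\end{align*}
after absorbing $\kappa_1^h$ into $\constant$ and setting $\kappa=\kappa_1$. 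The form stated in the lemma then follows from the elementary inequality $x^{(q_1+q_2)/q}\leq x^{q_2/q}+x$ for $x\geq 0$: if $x\leq 1$, the larger exponent gives $x^{(q_1+q_2)/q}\leq x^{q_2/q}$, and if $x\geq 1$, the fact that $(q_1+q_2)/q\leq 1$ gives $x^{(q_1+q_2)/q}\leq x$.

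The main obstacle is avoiding stray powers of $u_n$. A direct Hölder separating $|X_h|^{q_1}$ and $V^{q_2/q}(\mby_h)$ fails because the indicator $\1_A$ controls $X_0$ but not $X_h$, so bounding $\esp[\1_A|X_h|^{q_1q/(q-q_2)}]$ only through a moment of $|X_h|$ keeps an uncompensated factor $u_n^{q-q_1-q_2}$. The trick is to first trade $|X_h|^{q_1}$ for $V^{q_1/q}(\mby_h)$ via~(\ref{eq:g-bound-v}), so that the subsequent Markov step yields exactly the exponent $r=(q_1+q_2)/q$ on $V(\mby_0)$, which Hölder then matches with the compensating power $u_n^{q_1+q_2}$.
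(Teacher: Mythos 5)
Your proof is correct, and it differs from the paper's at the one genuinely delicate point: how to handle the factor $|X_h/u_n|^{q_1}$. The paper splits the expectation according to $\{|X_h|\leq u_n\}$ versus $\{|X_h|>u_n\}$. On the first event it simply discards $|X_h/u_n|^{q_1}\leq 1$ and applies~(\ref{eq:moment-geometrique-jensen}) from time $0$, which after a Jensen/H\"older step yields the term $Q_n^{q_2/q}(s,t)$; on the second event it inflates the exponent, writing $|X_h/u_n|^{q_1}\leq |X_h/u_n|^{q-q_2}$, trades this for $V^{(q-q_2)/q}(\mby_h)$ via~(\ref{eq:g-bound-v}) so as to reconstitute a full power $V(\mby_h)$, and transports it back to time $0$ by iterating the drift condition~(\ref{eq:drift}), which yields the term $Q_n(s,t)$. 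You avoid the dichotomy entirely: you keep the exact exponent, trading $|X_h|^{q_1}$ for $V^{q_1/q}(\mby_h)$, merge with the $V^{q_2/q}(\mby_h)$ produced by the regeneration bound, pull $V^{(q_1+q_2)/q}$ back to time $0$ by drift iteration plus Jensen, and finish with H\"older at exponent $(q_1+q_2)/q$ together with regular variation. The backbone is identical in both arguments --- the bound~(\ref{eq:moment-geometrique-jensen}) with the discarded factor $\kappa_1^{j}\geq\kappa_1^{L-h}$, drift iteration, and a Jensen/H\"older step producing powers of $Q_n$ --- but your unified treatment gives the strictly sharper single-term bound $\constant\,\kappa^{-L}Q_n^{(q_1+q_2)/q}(s,t)$, and the two-term form stated in the lemma then follows from your elementary inequality $x^{(q_1+q_2)/q}\leq x^{q_2/q}+x$; in the paper those two terms are instead an artifact of the case split. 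Your bookkeeping of the split-chain conditioning (on $\mathcal{F}_h$ rather than on $\mby_h$ alone) is, if anything, slightly more careful than the paper's, and the remaining implicit points (the drift and regeneration bounds lifted to the split chain) are glossed at exactly the same level in both proofs.
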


\begin{proof}
  Let $\kappa$ be as in (\ref{eq:moment-geometrique}).  Let the left hand side of
  (\ref{eq:sum-cov-psi}) be denoted by $S_n(s)$.
  Conditioning on $\mby_0$ and applying (\ref{eq:moment-geometrique-jensen}), we obtain that there exists
  $\kappa_0\in (1,\kappa)$ such that
  \begin{align*}
    S_{n}(s)
    & \leq \constant \cdot \kappa_0^{-L} \frac{1}{u_n^{q_2}\tail{F}(u_n)} \esp \left[ V^{q_2/q_0}(\mby_0) |X_0/u_n|^{q_1} \ind{|X_0|>su_n} \right]  \\
    & \leq \constant \cdot \kappa_0^{-L} \frac{\tail{F}(u_ns)}
      {u_n^{q_2}\tail{F}(u_n)} \esp \left[ V^{q_2/q_0}(\mby_0) |X_0/u_n|^{q_1} \mid |X_0|>su_n \right]   \\
    & \leq \constant \cdot \kappa_0^{-L} \frac{\tail{F}(u_ns)}
      {u_n^{q_1+q_2}\tail{F}(u_n)} \esp \left[ V^{(q_1+q_2)/q_0}(\mby_0)  \mid |X_0|>su_n \right]\;.
  \end{align*}
  Applying Jensen's inequality  to the conditional distribution given $|X_0|>u_ns$, we
  obtain
  \begin{align}
    \label{eq:sn1}
    S_{n}(s)
    & \leq \constant \cdot \kappa_0^{-L} \frac{\tail{F}(u_ns)}
      {u_n^{q_1+q_2} \tail{F}(u_n)} \left(\esp \left[ V(\mby_0)  \mid |X_0|>su_n \right]  \right)^{(q_1+q_2)/q_0} \nonumber \\
    & =  \constant \cdot \kappa_0^{-L} \frac{\tail{F}(u_ns)}
      {u_n^{q_1+q_2} \tail{F}(u_n)} \left(\esp \left[ V(\mby_0)  \ind{ |X_0|>su_n} \right]/\tail{F}(u_n)  \right)^{(q_1+q_2)/q_0}\left(\frac{\tail{F}(u_n)}{\tail{F}(u_n s)}\right)^{(q_1+q_2)/q_0}\nonumber\\
    & = \constant\cdot\kappa_0^{-L}  \left(\frac{\tail{F}(u_ns)} {\tail{F}(u_n)} \right)^{1-(q_1+q_2)/q_0}  Q_n^{(q_1+q_2)/q_0}(s) \;.
  \end{align}
  This yields~(\ref{eq:sum-cov-psi}) since $\tail{F}(u_ns)/\tail{F}(u_n)$ is uniformly bounded on
  $[s_0,\infty)$ and $1-(q_1+q_2)/q_0>0$.
\end{proof}

\begin{lemma}
  \label{lem:reste}
  If \Cref{hypo:drift-small} holds, $r_n\tail{F}(u_n)=o(1)$, and $q_1+q_2\leq q_0$, then
  \begin{align}
    \label{eq:reste}
    \frac{1}{\tail{F}(u_n)} \esp\left[  \sum_{j=\tau_A+1}^{r_n{+h}} \ind{|X_0|>su_n}
    \ind{|X_j|>su_n} |X_0/u_n|^{q_1} |X_j/u_n|^{q_2} \right]  = o(1) \; ,
  \end{align}
  uniformly with respect to $s\geq s_0$.
\end{lemma}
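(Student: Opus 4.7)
The strategy is to exploit the regenerative structure provided by the atom $A$ of the extended split chain. On the event $\{\tau_A > h\}$, both $X_0$ and $X_h$ live in the pre-regeneration excursion, while the sum $\sum_{j=\tau_A+1}^{r_n}$ involves only post-regeneration terms. By the strong Markov property applied at the stopping time $\tau_A$, conditional on $\mathcal{F}_{\tau_A}$ the sequence $(X_{\tau_A+k})_{k\geq1}$ is distributed as $(X_k)_{k\geq1}$ under $\esp_A$ and is independent of $\mathcal{F}_{\tau_A}$. Conditioning on $\mathcal{F}_{\tau_A}$ and then extending the random range $[1, r_n - \tau_A]$ to $[1, r_n]$ (all integrands being non-negative) makes the inner expectation deterministic, so that
\begin{align*}
  \text{LHS} \leq \frac{1}{\tail{F}(u_n)}\esp\!\left[\1{\{su_n<X_0\leq tu_n\}}|X_h/u_n|^{q_1}\right] \cdot u_n^{-q_2}\esp_A\!\left[\sum_{k=1}^{r_n}|X_k|^{q_2}\1{\{su_n<X_k\leq tu_n\}}\right].
\end{align*}

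The inner $\esp_A$-sum is controlled by the regenerative method. Let $N(r_n)$ denote the number of returns of the split chain to $A$ by time $r_n$; splitting across excursion endpoints and applying Wald's identity to the i.i.d.\ excursion sums, then using the elementary renewal bound $\esp_A[N(r_n)+1] \leq 1 + r_n/\esp_A[\tau_A]$ and Kac's formula~(\ref{eq:kac}), one obtains
\begin{align*}
  \esp_A\!\left[\sum_{k=1}^{r_n} f(X_k)\right] \leq \left(1 + \frac{r_n}{\esp_A[\tau_A]}\right)\esp_A[\tau_A]\,\esp[f(X_0)] = (\esp_A[\tau_A] + r_n)\,\esp[f(X_0)].
\end{align*}
For $f(x) = |x|^{q_2}\1{\{x>su_n\}}$ Karamata's theorem (applicable since $q_2 \leq q < \alpha$) gives $\esp[f(X_0)] \leq C u_n^{q_2}\tail{F}(u_n)$, so the inner factor is at most $C(\esp_A[\tau_A] + r_n)\tail{F}(u_n) = o(1)$ by the assumption $r_n\tail{F}(u_n) = o(1)$ and the finiteness of $\esp_A[\tau_A]$ (granted by the drift condition).

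For the outer expectation, the Markov property at time $h$, the bound $|g|^q \leq cV$ from~(\ref{eq:g-bound-v}), iteration of the drift inequality~(\ref{eq:drift}) (giving $\esp[V(\mby_h) \mid \mby_0 = y] \leq CV(y)$), and Jensen's inequality with exponent $q_1/q \leq 1$ combine to give $\esp[|X_h|^{q_1} \mid \mby_0 = y] \leq C V^{q_1/q}(y)$. A further application of H\"older with exponents $q/q_1$ and $q/(q-q_1)$ yields
\begin{align*}
  \esp\!\left[V^{q_1/q}(\mby_0)\1{\{X_0>su_n\}}\right] \leq \esp\!\left[V(\mby_0)\1{\{X_0>su_n\}}\right]^{q_1/q}\tail{F}(u_n)^{1-q_1/q} \leq C u_n^{q_1}\tail{F}(u_n)\,Q_n(s)^{q_1/q}.
\end{align*}
Dividing by $\tail{F}(u_n) u_n^{q_1}$ the outer factor is bounded by $C' Q_n(s)^{q_1/q}$, which stays bounded thanks to~(\ref{eq:def-Q}); multiplying with the $o(1)$ bound for the inner factor concludes.

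The main obstacle is a clean articulation of the regenerative split: $\tau_A$ is a stopping time only for the extended split chain, so the factorisation via strong Markov must first be carried out there and then transferred back to functionals of $X_j = g(\mby_j)$, keeping track of the event $\{\tau_A > h\}$ that guarantees $X_h$ belongs to the pre-regeneration block. Once this step is in place, the remaining estimates are immediate consequences of Kac's formula, Karamata's theorem, and the $V$-moment control already used in the proof of~\Cref{lem:sum-cov-psi}.
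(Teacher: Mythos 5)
Your proof follows essentially the same route as the paper's: factorisation via the strong Markov property at $\tau_A$ after enlarging the summation range, control of the post-regeneration factor by a regenerative (renewal--reward) argument combined with Kac's formula~(\ref{eq:kac}) and Karamata's theorem, and control of the pre-regeneration factor by the iterated drift bound, $|g|^q\leq cV$, Jensen/H\"older, and condition~(\ref{eq:def-Q}). The one inaccuracy is the claimed renewal bound $\esp_A[N(r_n)+1]\leq 1+r_n/\esp_A[\tau_A]$: Wald's identity actually gives the reverse inequality, and by the inspection paradox the renewal function exceeds $t/\esp_A[\tau_A]$ by a quantity governed by the second moment of $\tau_A$ (cf.\ Lorden's inequality), so the stated bound is false in general; this is harmless here, since excursion lengths are at least $1$, whence $N(r_n)\leq r_n$ deterministically, and any $O(r_n)$ bound on $\esp_A[N(r_n)+1]$ suffices to obtain the required $O(r_n\tail{F}(u_n))=o(1)$ estimate.
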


\begin{proof}
  Let the left hand side of~(\ref{eq:reste}) be denoted by $R_n(s)$. Then, by the strong Markov
  property,
  \begin{align*}
     R_n(s) &  \leq \frac{1}{\tail{F}(u_n)} \esp\left[      \ind{|X_0|>u_ns} |X_0/u_n|^{q_1}
      \esp\left[\sum_{j=\tau_A+1}^{r_n{+h}} \ind{|X_j|>u_ns_0 } |X_j/u_n|^{q_2} \mid \mathbb{Y}_{\tau_A} \right] \right]  \\
    & \leq \frac{1}{\tail{F}(u_n)} \esp\left[ \ind{|X_0|>u_ns_0} |X_0/u_n|^{q_1}
       \esp \left[\sum_{j=\tau_A+1}^{r_n+\tau_A{+h}} \ind{|X_j|>u_ns_0 } |X_j/u_n|^{q_2} \mid \mathbb{Y}_{\tau_A} \right] \right] \\
    & \leq \frac{1}{\tail{F}(u_n)} \esp\left[ \ind{|X_0|>u_ns_0} |X_0/u_n|^{q_1} \right]
      \esp_A \left[\sum_{j=1}^{r_n{+h}} \ind{|X_j|>u_ns_0 } |X_j/u_n|^{q_2} \right]  \; .
  \end{align*}
  By classical regenerative arguments, Kac's formula~(\ref{eq:kac}) and regular variation, we obtain
  as $n\to\infty$ {(and since $h$ is fixed)},
  \begin{align*}
    \esp_{A} \left[ \sum_{j=1}^{r_n{+h}} \ind{|X_j|>u_n s_0}  |X_j/u_n|^{q_2} \right]
    & \sim    \frac{r_n}{\esp_{A}[\tau_A]} \esp_A\left[\sum_{j=1}^{\tau_A}\ind{|X_j|>u_n s_0 } |X_j/u_n|^{q_2}\right] \\
    & \leq r_n\tail{F}(u_n) \frac{\esp[\ind{|X_0|>u_n{s_0}}|X_0/u_n|^{q_2}]}{\tail{F}(u_n)} =
    O(r_n\tail{F}(u_n)) \; .
  \end{align*}
  This yields, applying again the drift condition, Condition~(\ref{eq:def-Q}) and Jensen's
  inequality,
  \begin{align*}
   \sup_{s\geq s_0} R_{n}(s) & = O(r_n) \esp\left[\ind{|X_0|>u_ns_0} |X_0/u_n|^{q_1} \right] \\
            & = O(r_n\tail{F}(u_n)) \frac{\esp\left[V^{q_1/q_0}(\mby_0) \mid |X_0|>u_ns_0\right]} {u_n^{q_1}} \\
          &= O(r_n\tail{F}(u_n)) \left(\frac{\esp\left[V(\mby_0) \mid |X_0|>u_ns_0\right]} {u_n^{q_0}}\right)^{q_1/q_0} = o(1) \; .
  \end{align*}
\end{proof}

\begin{lemma}
  \label{prop:conditiondhs-via-drift}
  Let \Cref{hypo:drift-small} hold.  Then $\mathcal{S}(u_n,r_n,\psi_{\epsilon})$ holds.
\end{lemma}

\begin{proof}
  For $j\geq h$, we have
  \begin{align}
    0 & \leq {|\psi_\epsilon(\vectorbold{X}_{0,h}/u_n)| |\psi_\epsilon(\vectorbold{X}_{j,j+h}/u_n)|} \nonumber \\
      & \leq \constant u_n^{{-q_0}} \sum_{i_1,i_2,i_3,i_4=0}^h \ind{|X_{i_1}|> u_n \epsilon }
        |X_{i_2}|^{q_0/2} \ind{|X_{j+i_3}|>u_n \epsilon} |X_{j+i_4}|^{q_0/2}\; .  \label{eq:first-expansion}
  \end{align}
  For all $i,i'$, we can write
  \begin{align*}
    & \ind{|X_{i}|> u_n \epsilon} |X_{i'}/u_n|^{q_0/2}  \\
    & \phantom{\ind{}} =   \ind{|X_{i}|> u_n \epsilon}  \ind{|X_{i'}| \leq  u_n \epsilon} |X_{i'}/u_n|^{q_0/2}
      +  \ind{|X_{i}|> u_n \epsilon}  \ind{|X_{i'}|> u_n \epsilon} |X_{i'}/u_n|^{q_0/2} \\
    & \phantom{\ind{}} \leq  \ind{|X_{i}|> u_n \epsilon}  \epsilon^{q_0/2}
      + \ind{|X_{i'}|> u_n \epsilon} |X_{i'}/u_n|^{q_0/2} \\
    & \phantom{\ind{}} \leq \ind{|X_{i}|>u_n \epsilon} |X_{i}/u_n|^{q_0/2} + \ind{|X_{i'}|>u_n \epsilon} |X_{i'}/u_n|^{q_0/2}  \; .
  \end{align*}
  Thus, we can restrict the sum in (\ref{eq:first-expansion}) to the set of indices
  $(i_1,i_2,i_3,i_4)$ such that $i_1=i_2$ and $i_3=i_4$.
  For $L>h$ and $i,i'\leq h$, we have by \Cref{lem:sum-cov-psi,lem:reste} and by stationarity,
  \begin{align*}
    \frac{1}{\tail{F}(u_n)}
    &  \esp \Big [\sum_{j={2L}}^{r_n} \ind{|X_{i}|>u_n s } \ind{|X_{j+i'}|>u_n s } |X_{i}/u_n|^{q_0/2}  |X_{j+i'}/u_n|^{q_0/2} \Big] \nonumber    \\
    & \leq \frac{1}{\tail{F}(u_n)} \esp \Big [\sum_{j={2L}}^{r_n}
      \ind{|X_0|>u_n s } \ind{|X_{j+i'-i}|>u_n s } |X_0/u_n|^{q_0/2}  |X_{j+i'-i}/u_n|^{q_0/2} \Big]  \nonumber    \\
    & \leq \frac{1}{\tail{F}(u_n)} \esp \Big [\sum_{j={2L-h}}^{r_n{+h}}
      \ind{|X_0|>u_n s } \ind{|X_{j}|>u_n s } |X_0/u_n|^{q_0/2}  |X_{j}/u_n|^{q_0/2} \Big]  \nonumber    \\
    & \leq \frac{1}{\tail{F}(u_n)} \esp\left[\sum_{j=L}^{\tau_A} \ind{|X_0|>u_n s}
      \ind{|X_{j}|>u_n s} |X_0/u_n|^{q_0/2} |X_{j}/u_n|^{q_0/2} \right]     \\
    & + \frac{1}{\tail{F}(u_n)} \esp\left[ \sum_{j=\tau_A+1}^{r_n{+h}} \ind{|X_0|>u_n s}
      \ind{|X_{j}|>u_n s} |X_0/u_n|^{q_0/2} |X_{j}/u_n|^{q_0/2} \right]     \\
    & \leq \constant \;\kappa_0^{-L} Q_n(s) + o(1) \; .
  \end{align*}
  where the $o(1)$ term is uniform \wrt\ $s\geq s_0$.  The bound (\ref{eq:def-Q}) in
  \Cref{hypo:drift-small} implies that $Q_n$ is asymptotically uniformly bounded on
  $[s_0,\infty)$. Therefore,
  \begin{align*}
    \limsup_{n\to\infty} \frac{1}{\tail{F}(u_n)} \esp \Big [\sum_{j=L}^{r_n}  \ind{|X_0|> u_n s }
    \ind{|X_j|> u_n s} |X_0/u_n|^{q_0/2}  |X_j/u_n|^{q_0/2} \Big]  = O(\kappa_0^{-L}) \; .
  \end{align*}
  Since $\kappa_0>1$, this proves (\ref{eq:anticlustering-markov}).
\end{proof}

\subsection{Proof of \Cref{theo:fidi}}
\label{sec:proof-tep-markov}

Fix $q\leq q_0/{(2+\delta)}$.  We apply \Cref{theo:tail-array-fidi} to the sequence
$\xi_j = (X_j,\dots,X_{j+h})$, $j\in\Zset$ in order to prove that $\TEPmult_{n} \fidi \TEPmult$ on
$\mcm_{h_{q,\epsilon}}$ for each $\epsilon>0$, where $h_{q,\epsilon}$ is defined  on $\Rset^{h+1}$ by
\begin{align}
  \label{eq:psi-epsilon-definition-a}
  h_{q,\epsilon}(\bx) = {|\bx|^{q}} \ind{|\bx|>\epsilon}\;.
\end{align}
Since $\mcl_q = \cup_{\epsilon>0} \mcm_{h_{q,\epsilon}}$, this will prove \Cref{theo:fidi}.
\begin{enumerate}[(i),wide=0pt]
\item \label{item:i-of-proof-of-fidi}  \Cref{hypo:drift-small} implies $\mathcal{S}(u_n,r_n,h_{q,\epsilon})$
  (cf.~\Cref{prop:conditiondhs-via-drift} and note that $h_{q_0/2,\epsilon}=\psi_{\epsilon}$) and $\beta$-mixing with geometric rate.  Therefore we can
  choose $r_n=\log^{1+\eta}(n)$ and $\ell_n= c \log(n)$ for $c$ large enough so that
  (\ref{eq:beta-fidi}) holds.
\item Since $q(2+\delta) \leq q_0$, assumptions~(\ref{eq:g-bound-v}) and~(\ref{eq:def-Q}) imply that
  Condition~(\ref{eq:psi-second-moment-1}) holds for $\psi=h_{q,\epsilon}$.
\item With $r_n$ as above, Conditions~(\ref{eq:condition-un-eta}) and (\ref{eq:missing-condition})
  are exactly (\ref{eq:rate-condition-fidi}) and (\ref{eq:rate-condition-fidi-unbounded}).
\end{enumerate}

\subsection{Proof of \Cref{theo:our-entropy}}
\label{sec:prooftheofuncmarkov}
Let $\ell_0(\Rset^{h+1})$ be the set of $\Rset^{h+1}$-valued sequences $\bx = (\bx_j)_{j\in\Zset}$
such that $\lim_{|j|\to\infty} |\bx_j|=0$.  Let~$\mch_q$ be the set of functions $f$ defined on
$\ell_0(\Rset^{h+1})$ for which there exists $\phi\in\mcl_q$ such that
\begin{align*}
  f(\bx) = \sum_{j\in\Zset} \phi(\bx_j) \; ,  \ \ \bx \in\ell_0(\Rset^{h+1}) \; .
\end{align*}
Since functions in $\mcl_q$ vanish in a neighborhood of zero, the series has finitely many non zero
terms and the function $\phi$ is uniquely determined by $f$ and will be denoted $\phi^f$.  We define
a pseudometric $\rho$ on $\mch_q$ (with $q<q_0/(2+\delta)$ by
\begin{align}
  \label{eq:def-semimetric-rho}
  \rho^2(f,g) & = \rho_h^2(\phi^f,\phi^g) = \numultseq{0,h}(\{\phi^{f}-\phi^{g}\}^2)  \; .
\end{align}
Let~$\{r_n\}$ be as in \ref{item:i-of-proof-of-fidi} of the proof of \Cref{theo:fidi} and let
$m_n=[n/r_n]$. Set
$\blockX_{n,i} = u_n^{-1}(\bX_{(i-1)r_n+1,(i-1)r_n+h+1},\dots,\bX_{ir_n,ir_n+h})$ and identify
  it with an element of $\ell_0(\Rset^{h+1})$ by adding zeros on both sides. Then
\begin{align*}
  \mbz_{n,i}(f)
  & = f(\blockX_{n,i}) = \sum_{j=(i-1)r_n+1}^{ir_n} \phi^f(u_n^{-1} \vectorbold{X}_{j,j+h})  \; , \ \
    \mbz_n(f) = \sum_{i=1}^{m_n} \mbz_{n,i}(f) \; , \\
  \bar\mbz_n(f)
  & =  \frac{1}{\sqrt{n\tail{F}(u_n)}}(\mbz_n(f)-\esp[\mbz_n(f)]) \; .
\end{align*}
{Let $\mbz$ be the Gaussian process on $\mch_q$ defined by $\mbz(f)=\TEPmult(\phi^f)$.
Under the assumptions of \Cref{theo:fidi} and with $r_n = \log^{1+\eta}(n)$, we have
\begin{align*}
  \frac1{\sqrt{n\bar{F}(u_n)}}   \sum_{i=r_n[n/r_n]+1}^n \left\{\phi^f(u_n^{-1}
  \vectorbold{X}_{j,j+h}) - \esp[\phi^f(u_n^{-1} \vectorbold{X}_{0,h})]\right\} = o_P(1) \;.
\end{align*}
Moreover, since the envelope function belongs to $\mcl_q$, we can apply \Cref{lem:covar-existence}
(in view of \Cref{prop:conditiondhs-via-drift}) and we obtain
\begin{align*}
  \sup_{f\in\classF{G}} \left| \TEPmult_n(\phi^f)-\bar\mbz_n(f) \right|
  & \leq \frac1{\sqrt{n\tail{F}(u_n)}} \sum_{j=m_nr_n+1}^n \Phi_\mcg(X_{n,j}/u_n)
    + \frac{r_n\bar{F}(u_n)}{\sqrt{n\tail{F}(u_n)}} \frac{\esp[\Phi_\mcg(X_{n,0}/u_n) ]}{\bar{F}(u_n)} \\
  & = O_P\left(\sqrt{\frac{r_n}n}\right) + O\left(\frac{r_n\bar{F}(u_n)}{\sqrt{n\tail{F}(u_n)}}\right) = o_P(1) \; .
\end{align*}
Thus $\bar\mbz_n(f) = \TEPmult_{n}(\phi^f) + o_P(1)$ uniformly on $\mch_q$ and $\bar\mbz_n\fidi\mbz$ on $\mch_q$.}

We define the subclass $\classF{G}$ of $\mch_q$ associated to the subclass
$\mcg$ of $\mcl_q$ by  $\classF{G}=\{f:f(\bx) = \sum_{j\in\Zset} \phi(\bx_j)$,
$\phi\in\mcg\}$. {
Arguing as in \cite[Proof of Theorem~2.8]{drees:rootzen:2010}, in order to prove weak convergence of $\bar\mbz_n$, it suffices to prove the tightness of
the process $\mbz_n^*$ summing independent copies of $\mbz_{n,i}^*$ of $\mbz_{n,i}$ indexed by the class $\classF{G}$.  For this purpose, we apply
\Cref{theo:VW2.11.1}.}

\begin{enumerate}[$-$,wide=0pt]
\item The pointwise separability of $\mcg$ implies that  $\classF{G}$ is also pointwise separable.
\item The property~(\ref{eq:def-semimetric-rho}) yields that $(\classF{G},\rho)$ is totally bounded
  since $(\mcg,\rho_h)$ is totally bounded by assumption.

\item Since the envelope function $\Phi_\mcg$ is assumed to be in $\mcl_q$ and
  $\mcl_q=\cup_{\epsilon>0} \mcm_{h_q,\epsilon}$, \Cref{lem:asympt-neglig-unbounded} implies that
  the Lindeberg condition (\ref{eq:lindeberg-envelope}) holds.

\item We now  check (\ref{eq:continuite-L2}).  For $f,g\in\classF{G}$, we have
  \begin{align}
    \label{eq:conv-unif-metric-rho}
    \rho(f,g) & = \lim_{n\to\infty} \frac1{\tail{F}(u_n)} \esp[\{\phi^f(\bX_{0,h}/u_n)-\phi^g(\bX_{0,h}/u_n)\}^2] \; ,
  \end{align}
  Since the envelope function of $\mcg$ is in $\mcl_q$, there exists $\epsilon>0$ (which
    depends only on $\mcg$) such that $|\phi^f|\vee|\phi^g|\leq \epsilon^{-1} h_{q,\epsilon}$
    (defined in~(\ref{eq:psi-epsilon-definition-a})).  Set
    $\phi_{n,j} = \phi^f(\bX_{j,j+h}/u_n)-\phi^g(\bX_{j,j+h}/u_n)$.  For every integer $L>0$, by
    stationarity, we have
  \begin{align*}
    \frac{1}{r_n \tail{F}(u_n)} & \esp [ \{\mbz_{n,1}(f) - \mbz_{n,1}(g)\}^2] \\
    & \leq \frac{2}{ \tail{F}(u_n)}\sum_{j=0}^L \esp \left[ |\phi_{n,0}| |\phi_{n,j}|
    \right] + \frac{2}{ \tail{F}(u_n)} \sum_{j=L+1}^{r_n}
    \esp \left[ |\phi_{n,0}| |\phi_{n,j}| \right] \\
    & \leq \frac{2(L+1)\esp[\phi_{n,0}^2]} {\tail{F}(u_n)} + \frac{\constant}{ \tail{F}(u_n)}
    \sum_{j=L+1}^{r_n} \esp \left[ h_{q,\epsilon}(\vectorbold{X}_{0,h}/u_n) h_{q,\epsilon}(\vectorbold{X}_{j,j+h}/u_n) \right] \; .
  \end{align*}
  By \Cref{prop:conditiondhs-via-drift}, for every $\eta>0$, we can choose $L$ such that
  \begin{align}
    \limsup_{n\to\infty} \frac{1}{ \tail{F}(u_n)} \sum_{j=L+1}^{r_n} \esp \left[
      h_{q,\epsilon}(\vectorbold{X}_{0,h}/u_n) h_{q,\epsilon}(\vectorbold{X}_{j,j+h}/u_n) \right] \leq \eta \; . \label{eq:limsup1}
  \end{align}
  Applying this bound and the assumption (\ref{eq:continuite-pour-notre-theorem}) yields, for any
  sequence $\delta_n$ decreasing to zero,
  \begin{align}
    \limsup_{n\to\infty} \sup_{f,g\in\classF{G}\atop \rho(f,g)\leq \delta_n}
    \frac{1}{r_n \tail{F}(u_n)}\esp\left[\left(\mbz_{n,1}(f) - \mbz_{n,1}(g)\right)^2\right] \leq \eta \; .
  \end{align}
  Since $\eta$ is arbitrary, this proves (\ref{eq:continuite-L2}).
\item
Since $\mcg$ is linearly ordered, so is $\classF{G}$ thus it is a VC subgraph class and the entropy
condition (\ref{eq:randomentropy}) holds by
\cite[Theorem~3.7.37]{gine:nickl:2016}.
\end{enumerate}
We have checked all the assumptions of \Cref{theo:VW2.11.1}. { Therefore, $\mbz_n^*$ and hence
$\bar\mbz_n$ converges to $\mbz$ in $\ellinfty(\classF{G})$ and since $\TEPmult(\phi^f)=\mbz(f)$ by definition, this proves our result.}
\subsection{Proof of \Cref{coro:tepusual,coro:randomthreshold}}
\label{sec:prooftheofeasible}

We first apply \Cref{theo:our-entropy} with
$\mcg_I=\{I_s = \1{(s,\infty)\times\Rset^h}, s\in[s_0,t_0]\}$ for $0<s_0\leq 1 \leq t_0$. Then
\begin{align*}
  \rho_h(I_s,I_t) = |s-t|^{-\alpha} \leq \alpha s_0^{-\alpha-1} |s-t| \; .
\end{align*}
The class $\mcg_I$ is pointwise separable, linearly ordered and totally bounded for the metric
$\rho_h$. Condition (\ref{eq:continuite-pour-notre-theorem}) holds by regular variation and the
uniform convergence theorem. The envelope of $\mcg_I$ is $I_{s_0}$ which belongs to $\mcl_q$.  This
proves \Cref{coro:tepusual}.

We now prove \Cref{coro:randomthreshold}.  Set
\begin{align*}
  B_{n,1}(s) = \sqrt{k}\{\esp[M_n(I_s)]-s^{-\alpha}\} \; , \ \  B_{n,2}(s) = \sqrt{k}\{\esp[M_n(\phi_s)]-\numultseq{0,h}(\phi_s)\} \; .
\end{align*}
By conditions~(\ref{eq:def-tep-bias-uniform}) and~(\ref{eq:bias-psi}), we have
$\lim_{n\to\infty}\sup_{s\in[s_0,t_0]} B_{n,i}(s)=0$, $i=1,2$.  Using this bound and
$\numultseq{0,h}(\phi_s) = s^{-\alpha}\numultseq{0,h}(\phi)$, we obtain after some algebra,
\begin{align*}
  \sqrt{k}\left(  \frac{M_n(\phi_{s})}{M_n(I_{s})} - \numultseq{0,h}(\phi) \right)
  & = \frac{\TEPmult_n(\phi_s) - \numultseq{0,h}(\phi) \TEPmult_n(I_s)-\numultseq{0,h}(\phi)B_{n,1}(s)+ B_{n,2}(s)}{M_n(I_s)} \\
  & = \frac{\TEPmult_n(\phi_s) - \numultseq{0,h}(\phi) \TEPmult_n(I_s)+o(1)}{M_n(I_s)} \; ,
\end{align*}
the term $o(1)$ begin uniform in $s \in[s_0,t_0]$ and $\phi\in\mcg_0$. Moreover,
\begin{align*}
  M_n(I_s)
  & = s^{-\alpha} + k^{-1/2} \TEPmult_n(I_s) +  k^{-1/2}B_{n,1}(s) = s^{-\alpha}  + o_p(1) \; ,
\end{align*}
again uniformly in $s \in[s_0,t_0]$. Therefore,
\begin{align}
  \sqrt{k}\left(  \frac{M_n(\phi_{s})}{M_n(I_{s})} - \numultseq{0,h}(\phi) \right)
  & = \frac{\TEPmult_n(\phi_s) - \numultseq{0,h}(\phi) \TEPmult_n(I_s)+o(1)}{s^{-\alpha}+o_P(1)} \; ,
\end{align}
the terms $o_P(1)$ begin uniform in $s \in[s_0,t_0]$ and $\phi\in\mcg_0$.
Set  $\zeta_n = X_{n:n-k}/u_n$. Since $ M_n(I_{\zeta_n})=1$,
\begin{align*}
  \widehat{\TEPmult}_n(\phi) = \sqrt{k} \left(\frac{M_n(\phi_{\zeta_n})}{M_n(I_{\zeta_n})} -\numultseq{0,h}(\phi)\right)\; ,
\end{align*}
and $\zeta_n\convprob1$ (see comments after \Cref{coro:tepusual}), we finally obtain that
\begin{align*}
\widehat{\TEPmult}_n(\phi)
  =   \frac{\TEPmult_n(\phi_{\zeta_n}) - \numultseq{0,h}(\phi) \TEPmult_n(I_{\zeta_n})+o(1)}{\zeta_n^{-\alpha}+o_P(1)}
  \convweak \TEPmult(\phi) - \numultseq{0,h}(\phi) \TEPmult(\1{[1,\infty)\times\Rset^h}) \; ,
\end{align*}
on $\ellinfty(\mcg_0)$.
\subsubsection{Proof for \Cref{sec:cluster}}
\label{sec:proof-clusterindex}

By homogeneity of $\numultseq{0,h}$, the semimetric $\rho_h$ on the class $\mcg$ for
this example becomes, for $s_0<s<t$,
\begin{align*}
  \rho_h^2(\phi_s,\phi_t)  & = \numultseq{0,h}(\{\ind{x_0+\cdots+x_h>s}-\ind{x_0+\cdots+x_h>t}\}^2) \\
                           & = \numultseq{0,h}(A_h) (s^{-\alpha}-t^{-\alpha}) \leq \constant \cdot (t-s) \; .
\end{align*}
Thus $(\mcg,\rho_h)$ is totally bounded. Moreover, by regular variation and the uniform
convergence theorem, the convergence
\begin{align*}
  \lim_{n\to\infty}  \frac1{\bar{F}(u_n)} \esp\left[ \{\ind{X_0+\cdots+X_h>su_n} -\ind{X_0+\cdots+X_h>tu_n} \}^2\right]
  & = \rho_h^2(\phi_s,\phi_t)
\end{align*}
is uniform on $[s_0,t_0]^2$. Thus (\ref{eq:continuite-pour-notre-theorem}) holds.

\subsubsection{Proof for \Cref{sec:expected-shortfall}}
\label{sec:proof-shorfall}
Since $\alpha>2$, the semimetric $\rho_h$ on the class $\mcg$ for to this example becomes, for
$s_0<s<t$,
\begin{align*}
  \rho_h^2(\phi_s,\phi_t)
  & = \esp[\{s^{-1}Y_h\ind{Y_0>s}-t^{-1}Y_h\ind{Y_0>t}\}^2] \\
& = \frac{\alpha\esp[\Theta_h^2]}{\alpha-2} \left\{(s^{-\alpha}+t^{-\alpha}-2\frac{1}{st}(s\vee t)^{-\alpha+2}\right\} \\
  & \leq \frac{\alpha\esp[\Theta_h^2]}{\alpha-2} (s^{-\alpha}-t^{-\alpha}) \leq \constant \cdot (t-s) \; .
\end{align*}
Thus $(\mcg,\rho_h)$ is totally bounded. Moreover, by regular variation and the uniform
convergence theorem, the convergence
\begin{align*}
  \lim_{n\to\infty}  \frac1{u_n^2\bar{F}(u_n)} \esp\left[ \{s^{-1}X_h\ind{X_0>su_n} -t^{-1}X_{h}\ind{X_0>tu_n} \}^2\right]
  & = \frac{\alpha\esp[\Theta_h^2]}{\alpha-2} (s^{-\alpha}-t^{-\alpha})
\end{align*}
is uniform on $[s_0,t_0]^2$. Thus (\ref{eq:continuite-pour-notre-theorem}) holds.
\subsubsection{Proof of \Cref{theo:fclt-edfstp}}
\label{sec:proof-fclt-edfstp}
Consider sets
\begin{align*}
  C_{s,y}(\bx) = \{\bx\in\Rset^{h+1}:|x_0|>s,x_h\leq |x_0|y\} \; .
\end{align*}
for $s_0\leq s \leq t_0$ and $y\in [a,b]$.
The fidi convergence on the class  $\{\1{C_{s,y}}, s_0\leq s \leq t_0, y\in [a,b]\}$ is a consequence of \Cref{theo:fidi}. We only need to prove tightness over $s \in [s_0,t_0]$ at one
point $y$ in order to conclude the tightness over a finite collection of points $y_1,.\ldots, y_k$, $k\ge0$. Define the linearly ordered class
  $\mcg = \{\1{C_{s,y}}, s_0\leq s \leq t_0\}$. The
class $\mcg$ is pointwise separable and its envelope function
 $\1{C_{s_0,y}}$   is in $\mcl_0$. We now
prove that $(\mcg,\rho_h)$ is totally bounded.  For $s_0\leq s<t$, we have
\begin{align}
  \rho_h^2((\1{C_{s,y}}-\1{C_{t,y}})^2)  & = \{s^{-\alpha}  + t^{-\alpha}  - 2 t^{-\alpha} \}L_h(y) \nonumber \\
  & \leq s^{-\alpha}  + t^{-\alpha}  - 2 t^{-\alpha} \leq \alpha s_0^{-\alpha-1} |s-t| \; .  \label{eq:bound-on-norm-for-spectral}
\end{align}
Thus the class $(\mcg,\rho_h)$ is totally bounded.  Moreover, the convergence
\begin{multline*}
  \lim_{n\to\infty}  \frac1{\bar{F}(u_n)}
   \esp\left[\left(\ind{|X_0|>su_n}\ind{X_h\leq |X_0|y} - \ind{|X_0|>tu_n}\ind{X_h\leq |X_0|y'} \right)^2 \right]  \\
   = \{s^{-\alpha}  + t^{-\alpha}  - 2 (s\vee t)^{-\alpha}\} L_h(y)
\end{multline*}
is uniform on compact sets $[s_0,t_0]$ because of
monotonicity. Therefore~(\ref{eq:continuite-pour-notre-theorem}) holds.

\section{Proof of \Cref{prop:counterexample}}
\label{sec:proof-counter}
  Let $N_n$ be the number of returns to the state~1 before time $n$, that is
\begin{align*}
  N_n = \sum_{j=0}^n \ind{X_j=1} \; .
\end{align*}
Set also $\sigma_{-1} = -\infty$, { $\sigma_0=X_0-1$ and $\sigma_{j}=X_0-1+\sum_{k=1}^{j}Z_{\sigma_{k-1}+1}$} for $j\geq1$.  Then,
$\{N_n\}$ is the counting process associated to the delayed renewal process $\{\sigma_n\}$. That is, for
$n,k\geq0$,
\begin{align*}
  N_n = k \Leftrightarrow \sigma_{k-1} \leq n < \sigma_k \; ,
\end{align*}
Since $\esp[Z_0]<\infty$, setting $\lambda=1/\esp[Z_0]$, we have $N_n/n\to \lambda$ a.s.  With this
notation, we have, for every $s>0$,
\begin{align}
  \label{eq:decomposition-cycle}
  \sum_{j=0}^n \ind{X_j>u_ns} & = (X_0-[u_ns])_+ + \sum_{j=1}^{N_n} ({Z_{\sigma_{j-1}+1}}-[u_ns])_+ + \varsigma_n \; ,
\end{align}
where $\varsigma_n = (n-\sigma_{N_n})\wedge(Z_{N_n}-[u_ns])_+$  is a correcting term accounting for the possibly incomplete last portion of
the path. Since $\varsigma_n=O_P(1)$, it does not play any role in the asymptotics.
\begin{enumerate}[$\bullet$,wide=0pt]
\item  Consider the case $\lim_{n\to\infty}n\tail{F}_Z(u_n) = 0$. Then, for an integer $m>\lambda$,
  \begin{align*}
    \pr\left(\sum_{j=1}^{N_n} (Z_j-[u_ns])_+ \ne 0\right) & \leq \pr(N_n >m n) + \pr \left(
      \sum_{j=1}^{m n}  (Z_j-[u_ns])_+ \ne 0\right) \\
    & \leq \pr(N_n >m n) + \pr( \exists j \in \{1,\dots,mn\}, Z_j>[u_ns]) \\
    & \leq \pr(N_n >m n) + mn\tail{F}_Z([u_ns]) \to 0 \;,\qquad n\to \infty .
  \end{align*}
  This proves our first claim.
\end{enumerate}
We proceed with the case $\lim_{n\to\infty}n\tail{F}_Z(u_n)=\infty$. Using (\ref{eq:X-Z}) and
(\ref{eq:decomposition-cycle}) we have
\begin{multline}
  \sum_{j=0}^n \{\ind{X_j>u_ns} - \pr(X_0>u_ns)\} = \sum_{j=1}^{N_n} \{(Z_{{\sigma_{j-1}+1}}-[u_ns])_+ - \esp[(Z_0-[u_ns])_+]\} \\
  +  (X_0-[u_ns])_+ + \varsigma_n + \{N_n-\lambda n) \esp[(Z_0-[u_ns])_+] \; . \label{eq:decomp}
\end{multline}
\begin{enumerate}[$\bullet$,wide=0pt]
\item Consider the case $n\tail{F}_Z(u_n) \to \infty$ and $\tailz\in(1,2)$.  Since
  $\lim_{n\to\infty}\esp[(Z_0-[u_ns])_+]=0$, we obtain, for every $s>0$,
\begin{multline*}
  a_n^{-1}  \sum_{j=0}^n  \{\ind{X_j>u_ns} - \pr(X_0>u_ns)\} \\
   = a_n^{-1} \sum_{j=1}^{N_n} \{Z_{{\sigma_{j-1}+1}} - \esp[Z_0]\} { -  }  a_n^{-1} \sum_{j=1}^{N_n} \{Z_{{\sigma_{j-1}+1}}\wedge [u_ns] -
  \esp[Z_0\wedge[u_ns]]\} + o_P(1) \; .
\end{multline*}
  By regular variation of $\tail{F}_Z$, we obtain
  \begin{align*}
    \var\left(\sum_{j=1}^n \{Z_j\wedge [u_ns] -\esp[(Z_0\wedge [u_ns])]\}\right) =
    O(u_n^2n\tail{F}_Z(u_n)) \; .
  \end{align*}The regular variation of $\tail{F}_Z$ and the conditions $n\tail{F}_Z(u_n)\to\infty$ and
  $n\tail{F}_Z(a_n)\to1$ imply that $u_n/a_n\to0$.  Define $h(x) = x\sqrt{\tail{F}_Z(x)}$. The function  $h$
  is regularly varying at infinity with index $1-\tailz/2>0$ and thus
  \begin{align*}
    \lim_{n\to\infty} \frac{u_n \sqrt{n\tail{F}_Z(u_n)} }{a_n} = \lim_{n\to\infty} \frac{u_n
      \sqrt{\tail{F}_Z(u_n)} }{a_n\sqrt{\tail{F}_Z(a_n)}} = \lim_{n\to\infty} \frac{h(u_n)}{h(a_n)}
    = 0 \; .
  \end{align*}
  This yields
  \begin{align*}
    a_n^{-1}  \sum_{j=0}^n  \{\ind{X_j>u_ns}  - \pr(X_0>u_ns)\}
    & = a_n^{-1} \sum_{j=1}^{N_n} \{Z_{{\sigma_{j-1}+1}} - \esp[Z_0]\} + o_P(1) \; ,
  \end{align*}
  where the $o_P(1)$ term is locally uniform with respect to $s>0$.  Since the distribution of $Z_0$
  is in the domain of attraction of the $\tailz$-stable law and the sequence $\{Z_j\}$ is \iid,
   $\{a_n^{-1} (\sigma_{[ns]}-\lambda^{-1}ns),s>0\} \Rightarrow\Lambda$, where $\Lambda$ is a mean zero,
  totally skewed to the right $\tailz$-stable L\'evy process, and the convergence holds with respect
  to the $J_1$ topology on compact sets of $(0,\infty)$.  Since $\lim_{n\to\infty} N_n/n= \lambda$
  a.s., and since a L\'evy process is stochastically continuous, this yields, by
  \cite[Proposition~13.2.1]{whitt:2002},
  \begin{align*}
    a_n^{-1} \sum_{j=1}^{N_n} \{Z_{{\sigma_{j-1}+1}} - \esp[Z_0]\} \convdistr \Lambda(\lambda)\;,\qquad n\to \infty.
  \end{align*}
  This proves the second claim.
\item Consider now the case $\tailz>2$. In that case, Vervaat's Lemma implies that $(N_n-\lambda
  n)/\sqrt n$ converges weakly to a gaussian distribution. Thus, (\ref{eq:decomp}) combined with
  $\esp[(Z_0-[u_ns])_+]=O(u_n\tail{F}_Z(u_n))$, yields
  \begin{align*}
    (N_n-\lambda n)\esp[(Z_0-[u_ns])_+] = O_P(u_n\tail{F}_Z(u_n)\sqrt{n}) \; .
  \end{align*}
  Next, we apply the Lindeberg central limit theorem for triangular arrays of independent random
  variables to prove that
  \begin{align*}
    \frac1{u_n\sqrt{n\tail{F}_Z(u_n)}} \sum_{j=1}^{n} \{(Z_{{{\sigma_{j-1}+1}}}-[u_ns])_+-\esp[(Z_0-[u_ns])_+]\}
      \convdistr N\left(0,\frac{2s^{1-\alpha}}{ \alpha (\alpha-1)} \right) \; .
  \end{align*}
  By regular variation of $\tail{F}_Z$, we have, for all $\delta\in[2,\tailz)$,
  \begin{align*}
    \esp[(Z_0-u_ns)_+^\delta] \sim C_\delta  u_n^\delta \tail{F}_Z(u_n) s^{\delta-\tailz} \; ,
  \end{align*}
  with $C_\delta = \delta \int_1^\infty (z-1)^{\delta-1} z^{-\tailz} \rmd z$. Set
  \begin{align*}
    Y_{n,j}(s) = \frac1{u_n\sqrt{n\tail{F}_Z(u_n)}} \{(Z_j-[u_ns])_+-\esp[(Z_0-[u_ns])_+]\} \; .
  \end{align*}
  The previous computations yield, for $\delta\in(2,\tailz)$ and $s>0$,
  \begin{align*}
    \lim_{n\to\infty} n\var(Y_{n,1}(s)) & =  \frac{2 s^{2-\tailz}}{  (\tailz-1)(\tailz-2)}  \; , \\
    n\esp[|Y_{n,1}|^\delta] & = O\left( \frac{nu_n^\delta\tail{F}_Z(u_n)}{u_n^\delta
        \{n\tail{F}_Z(u_n)\}^{\delta/2}} \right) = O\left( \{n\tail{F}_Z(u_n)\}^{1-\delta/2}\right) =o(1) \; .
  \end{align*}
  We conclude that the Lindeberg central limit theorem holds. Convergence of the finite dimensional
  distribution is done along the same lines. Tightness with respect to the $J_1$ topology on
  $(0,\infty)$ is proved by applying \cite[Theorem~13.5]{billingsley:1999}.
\end{enumerate}

\appendix

\section{Convergence in $\ell_\infty$}\label{sec:technical-tools}
\begin{theorem}[{\citet[Theorem 3.7.23]{gine:nickl:2016}}]
  \label{theo:GN3.7.23}
  Let $\{\mathbb{Z}_n,n \in\Nset\}$, be a sequence of processes with values in $\ell_\infty(\mcf)$. Then the
  following statements are equivalent.
  \begin{enumerate}[(i)]
  \item \label{item:asymequi} The finite dimensional distributions of the processes $\mathbb{Z}_n$
    converge in law and there exists a pseudometric $\rho$ on $\mcf$ such that  $(\mcf, \rho)$ is totally
    bounded and for all $\epsilon>0$,
    \begin{align}
      \label{eq:asymptotic-equicontinuity}
      \lim_{\delta\to0} \limsup_{n\to\infty} \pr^*\left(\sup_{\rho(f,g)<\delta} |\mathbb{Z}_n(f)-\mathbb{Z}_n(g)| > \epsilon\right) = 0 \; .
    \end{align}
  \item \label{item:weak-conv-linfty} There exists a process $\mathbb{Z}$ whose law is a tight Borel
    probability measure on $\ell_\infty(\mcf)$ and such that $\mathbb{Z}_n \convweak \mathbb{Z}$ in
    $\ell_\infty(\mcf)$.
  \end{enumerate}
  Moreover, if \ref{item:asymequi} holds, then the process $\mathbb{Z}$ in
  \ref{item:weak-conv-linfty} has a version with bounded uniformly continuous paths for $\rho$.
\end{theorem}

The following result provides a sufficient condition for (\ref{eq:asymptotic-equicontinuity}) above.
Let $\{X_{n,i},1\leq i \leq m_n\}$, $n\geq1$, be an array of row-wise \iid\ random elements in a
measurable space $(\mathsf{X},\mathcal{X})$ and define $Z_{n,i}(f) = f(X_{n,i})$,
$f\in\mcf$. {Let $a_n$ be a non decreasing sequence and}  $\mcf$ be a set of measurable
functions defined on $\mathsf{X}$. Define the random pseudometric $d_n$ on $\mcf$ by
\begin{align*}
  d_n^2(f,g) = {\frac1{a_n^2}}\sum_{i=1}^{m_n} \{f(X_{n,i})-g(X_{n,i})\}^2\; , \ \ f,g\in \mcf\;.
\end{align*}
Let $N(\epsilon,\mcf,d_n)$ be the minimum number of balls in the pseudometric $d_n$ needed to
cover~$\mcf$.  Let $\mathbb{Z}_n$ be the empirical process defined by
\begin{align*}
  \mathbb{Z}_n(f) = \frac1{a_n} \sum_{i=1}^{m_n} \{f(X_{n,i})-\esp[f(X_{n,i})]\}  \; , \ \ f \in \mcf \; .
\end{align*}
Define finally the sup-norm $\|H\|_\mcf = \sup_{f\in\mcf} |H(f)|$ for any functional $H$ on
$\mcf$. If $\mcf$ is a pseudometric space and $H$ is measurable on $\mcf$ then the separability of
$\mcf$ implies that $\|H\|_\mcf$ is measurable.
\begin{theorem}[Adapted from {\citet[Theorem~2.11.1]{vandervaart:wellner:1996}}]
  \label{theo:VW2.11.1}
  Assume that the pseudometric space  $\mcf$ is totally bounded and pointwise separable.
  \begin{enumerate}[(i)]
  \item For all $\eta>0$,
    \begin{align}
      \label{eq:lindeberg-envelope}
      \lim_{n\to\infty} a_n^{-2} {m_n} \esp[\|Z_{n,1}\|^2_\mcf\ind{\|Z_{n,1}\|_\mcf>\eta a_n}] = 0 \; .
    \end{align}
  \item For every sequence $\{\delta_n\}$ which decreases to zero,
    \begin{align}
      \lim_{n\to\infty} \sup_{f,g\in\mcf\atop \rho(f,g)\leq\delta_n} \esp[d_n^2(f,g)] = 0 \; , \label{eq:continuite-L2}  \\
      \int_0^{\delta_n} \sqrt{\log N(\epsilon,\mcf,d_n)} \rmd \epsilon \stackrel{\pr}{\longrightarrow} 0  \; . \label{eq:randomentropy}
    \end{align}
  \end{enumerate}
  Then $\mathbb{Z}_n$ is asymptotically $\rho$-equicontinuous, \ie\ (\ref{eq:asymptotic-equicontinuity}) holds. .
\end{theorem}

\paragraph{Acknowledgements} The research of Rafal Kulik was supported by the NSERC grant 210532-170699-2001.  The
research of Philippe Soulier and Olivier Wintenberger was partially supported by the
ANR14-CE20-0006-01 project AMERISKA network. Philippe Soulier also acknowledges support from the
LABEX MME-DII.

\bibliographystyle{apalike}

\begin{thebibliography}{}

\end{thebibliography}


\begin{thebibliography}{}

\bibitem[Alsmeyer, 2003]{alsmeyer:2003}
Alsmeyer, G. (2003).
\newblock On the harris recurrence of iterated random lipschitz functions and
  related convergence rate results.
\newblock {\em Journal of Theoretical Probability}, 16(1):217--247.

\bibitem[Basrak and Segers, 2009]{basrak:segers:2009}
Basrak, B. and Segers, J. (2009).
\newblock Regularly varying multivariate time series.
\newblock {\em Stochastic Process. Appl.}, 119(4):1055--1080.

\bibitem[Billingsley, 1999]{billingsley:1999}
Billingsley, P. (1999).
\newblock {\em Convergence of probability measures}.
\newblock Wiley Series in Probability and Statistics: Probability and
  Statistics. John Wiley \& Sons Inc., New York, second edition.

\bibitem[Bradley, 2005]{bradley:2005}
Bradley, R.~C. (2005).
\newblock Basic properties of strong mixing conditions. {A} survey and some
  open questions.
\newblock {\em Probability Surveys}, 2:107--144.

\bibitem[Cline, 2007]{cline:2007}
Cline, D. B.~H. (2007).
\newblock Regular variation of order 1 nonlinear {AR-ARCH} models.
\newblock {\em Stochastic Processes and their Applications}, 117(7):840---861.

\bibitem[Davis and Mikosch, 2009]{davis:mikosch:2009}
Davis, R.~A. and Mikosch, T. (2009).
\newblock The extremogram: A correlogram for extreme events.
\newblock {\em Bernoulli}, 38A:977--1009.
\newblock Probability, statistics and seismology.

\bibitem[Douc et~al., 2014]{douc:moulines:stoffer:2014}
Douc, R., Moulines, E., and Stoffer, D.~S. (2014).
\newblock {\em Nonlinear time series}.
\newblock Chapman \& Hall/CRC Texts in Statistical Science Series. Chapman \&
  Hall/CRC, Boca Raton, FL.
\newblock Theory, methods, and applications with R examples.

\bibitem[Drees, 1998]{drees:1998}
Drees, H. (1998).
\newblock Optimal rates of convergence for estimates of the extreme value
  index.
\newblock {\em The Annals of Statistics}, 26(1):434--448.

\bibitem[Drees, 2000]{drees:2000weighted}
Drees, H. (2000).
\newblock Weighted approximations of tail processes for {$\beta$}-mixing random
  variables.
\newblock {\em The Annals of Applied Probability}, 10(4):1274--1301.

\bibitem[Drees, 2002]{drees:2002tail}
Drees, H. (2002).
\newblock Tail empirical processes under mixing conditions.
\newblock In {\em Empirical process techniques for dependent data}, pages
  325--342. Birkh\"auser Boston, Boston, MA.

\bibitem[Drees, 2003]{drees:2003}
Drees, H. (2003).
\newblock Extreme quantile estimation for dependent data, with applications to
  finance.
\newblock {\em Bernoulli}, 9(4):617--657.

\bibitem[Drees and Rootz{\'e}n, 2010]{drees:rootzen:2010}
Drees, H. and Rootz{\'e}n, H. (2010).
\newblock Limit theorems for empirical processes of cluster functionals.
\newblock {\em Ann. Statist.}, 38(4):2145--2186.

\bibitem[Drees et~al., 2014]{drees:segers:warchol:2014}
Drees, H., Segers, J., and Warcho{\l}, M. (2014).
\newblock Statistics for tail processes of markov chains.
\newblock arxiv:1405.7721.

\bibitem[Gin\'{e} and Nickl, 2016]{gine:nickl:2016}
Gin\'{e}, E. and Nickl, R. (2016).
\newblock {\em Mathematical Foundations of Infinite-Dimensional Statistical
  Models}.
\newblock Cambridge University Press, New York.

\bibitem[Hult and Samorodnitsky, 2008]{hult:samorodnitsky:2008}
Hult, H. and Samorodnitsky, G. (2008).
\newblock Tail probabilities for infinite series of regularly varying random
  vectors.
\newblock {\em Bernoulli}, 14(3):838--864.

\bibitem[Janssen and Segers, 2014]{janssen:segers:2014}
Janssen, A. and Segers, J. (2014).
\newblock Markov tail chains.
\newblock {\em Journal of Applied Probability}, 51(4):1133--1153.

\bibitem[Leadbetter et~al., 1988]{leadbetter:rootzen:dehaan:1998}
Leadbetter, M.~R., Rootz{\'e}n, H., and de~Haan, L. (1988).
\newblock On the distribution of tail array sums for strongly mixing stationary
  sequences.
\newblock {\em Ann. Appl. Probab.}, 8(3):868--885.

\bibitem[Meyn and Tweedie, 2009]{meyn:tweedie:2009}
Meyn, S. and Tweedie, R.~L. (2009).
\newblock {\em Markov chains and stochastic stability}.
\newblock Cambridge University Press, Cambridge, second edition.

\bibitem[Mikosch and Wintenberger, 2013]{mikosch:wintenberger:2013}
Mikosch, T. and Wintenberger, O. (2013).
\newblock Precise large deviations for dependent regularly varying sequences.
\newblock {\em Probability Theory and Related Fields}, 156(3-4):851--887.

\bibitem[Mikosch and Wintenberger, 2014]{mikosch:wintenberger:2014}
Mikosch, T. and Wintenberger, O. (2014).
\newblock The cluster index of regularly varying sequences with applications to
  limit theory for functions of multivariate {M}arkov chains.
\newblock {\em Probability Theory and Related Fields}, 159(1-2):157--196.

\bibitem[Resnick, 2007]{resnick:2007}
Resnick, S.~I. (2007).
\newblock {\em Heavy-Tail Phenomena}.
\newblock Springer Series in Operations Research and Financial Engineering.
  Springer, New York.
\newblock Probabilistic and statistical modeling.

\bibitem[Roberts et~al., 2006]{roberts:rosenthal:segers:sousa:2006}
Roberts, G.~O., Rosenthal, J.~S., Segers, J., and Sousa, B. (2006).
\newblock Extremal indices, geometric ergodicity of markov chains, and mcmc.
\newblock {\em Extremes}, 9(3-4):213--229.

\bibitem[Rootz{\'e}n, 1988]{rootzen:1988}
Rootz{\'e}n, H. (1988).
\newblock Maxima and exceedances of stationary {M}arkov chains.
\newblock {\em Adv. in Appl. Probab.}, 20(2):371--390.

\bibitem[Rootz\'{e}n, 2009]{rootzen:2009}
Rootz\'{e}n, H. (2009).
\newblock Weak convergence of the tail empirical process for dependent
  sequences.
\newblock {\em Stoch. Proc. Appl.}, 119(2):468--490.

\bibitem[Rootz{\'e}n et~al., 1998]{rootzen:leadbetter:dehaan:1998}
Rootz{\'e}n, H., Leadbetter, M.~R., and de~Haan, L. (1998).
\newblock On the distribution of tail array sums for strongly mixing stationary
  sequences.
\newblock {\em Ann. Appl. Probab.}, 8(3):868--885.

\bibitem[Smith, 1992]{smith:1992}
Smith, R.~L. (1992).
\newblock The extremal index for a {M}arkov chain.
\newblock {\em Journal of Applied Probability}, 29(1):37--45.

\bibitem[van~der Vaart and Wellner, 1996]{vandervaart:wellner:1996}
van~der Vaart, A.~W. and Wellner, J.~A. (1996).
\newblock {\em Weak convergence and empirical processes}.
\newblock Springer, New York.

\bibitem[Whitt, 2002]{whitt:2002}
Whitt, W. (2002).
\newblock {\em Stochastic-process limits}.
\newblock Springer-Verlag, New York.

\end{thebibliography}

\end{document}